\documentclass[11pt]{article}
\title{Bulk limits of the truncated circular beta ensembles}
\date{}
\author{Yun Li, Mingchang Liu and Joseph Najnudel}

\usepackage{units}
\usepackage{amsmath, amsthm, amssymb,stackengine,bbm}
\usepackage{graphicx}
\usepackage{amsmath, enumerate}
\usepackage{bbm}
\usepackage{color}
\usepackage{hyperref}
\newtheorem{theorem}{Theorem}
\newtheorem{thmx}{Theorem}

\newtheorem{proposition}[theorem]{Proposition}
\newtheorem{corollary}[theorem]{Corollary}

\newtheorem{fact}[theorem]{Fact}
\newtheorem{assumption}[theorem]{Assumption}

\theoremstyle{definition} % For roman text in the body
\newtheorem{definition}[theorem]{Definition}
\newtheorem{remark}[theorem]{Remark}

\newcommand{\eps}{\varepsilon}

\newcommand{\R}{{\mathbb R}}
\newcommand{\D}{{\mathbb D}}

\newcommand{\C}{{\mathbb C}}

\newcommand{\ev}{{\mathbb E}}
\newcommand{\pr}{{\mathbb P}}
\newcommand{\ol}[1]{\overline{#1} \!\,} %overline

\newcommand{\diag}{\operatorname{diag}}

\newcommand{\cL}{{\mathcal L}}

\newcommand{\cM}{{\mathcal M}}
\newcommand{\cH}{{\mathcal H}}
\newcommand{\cC}{{\mathcal C}}

\newcommand{\cF}{{\mathcal F}}

\newcommand{\Sineb}{\operatorname{Sine}_{\beta}}

\newcommand{\Circ}{\mathsf{Circ}}

\newcommand{\wtl}{\widetilde}

\newcommand{\mat}[4]{\left( \begin{array}{cc}
		#1 & #2  \\
		#3 & #4  \\
	\end{array} \right)}

\newcommand{\trunc}[1]{\operatorname{Trunc}(#1)}

\newcommand{\ed}{\stackrel{d}{=}}

\newcommand{\rev}[1]{\wtl{#1}}

\begin{document}
	\maketitle

	\begin{abstract}
		We study the bulk limits of the rank-one truncation of the circular beta ensembles obtained by deleting the first row and column. We prove that the normalized Blaschke products of the truncated circular beta ensembles converge in law to a constant multiple (depending on $\beta$) of the i.i.d.~Gaussian analytic function (GAF) uniformly on compacts of the unit disk. As a consequence, we identify the bulk point process limit with the zero set of GAF, which was proved in the seminal work of Peres and Vir\'ag~\cite{PV} to form a determinantal point process with the Bergman kernel. Our method also applies to the rank-one multiplicative perturbations of the circular beta ensembles in the mesoscopic regime.
	\end{abstract}
	
	\section{Introduction}
	The size $n$ circular beta ensemble with parameter $\beta>0$ is the distribution of $n$ points $\{e^{i\theta_j}, 1\le j\le n\}$ on the unit circle with  joint probability density
	\begin{equation}\label{eq:cbe}
		\frac{1}{Z_{n,\beta}}\prod_{j<k\le n}|e^{i\theta_j}-e^{i\theta_k}|^\beta, \qquad \theta_j\in [-\pi,\pi),
	\end{equation}
	where $Z_{n,\beta}$ is an explicit normalizing constant (see e.g.~Section 2.8 of \cite{Forbook}). 
	When $\beta=2$, \eqref{eq:cbe} recovers the joint eigenvalue density of a uniformly chosen $n\times n$ unitary matrix, known as the size $n$ Circular Unitary Ensemble (CUE).
	By utilizing the algebraic structure of the joint intensities, Dyson, Gaudin, and Mehta derived the local scaling limit of CUE in the 1960s. They showed that the determinantal structure is preserved in the limit, and the limit process, known as the Sine$_2$ point process, is a translation invariant determinantal point process on $\mathbb{R}$. In \cite{CNN} Chhaibi, Najnudel and Nikeghbali studied the local scaling limit of the (normalized) characteristic polynomials of CUE, and constructed a random analytic function $\zeta_2$ (called the stochastic zeta function) so that its zero set has the same distribution as the Sine$_2$ point process. 
	
	When $\beta=1,4$, the joint density \eqref{eq:cbe} corresponds to certain random unitary matrices invariant under symmetric/self-dual transformations. For general $\beta>0$, there is no similar invariant matrix model. In \cite{KillipNenciu} Killip and Nenciu (motivated by the results of \cite{DE}) constructed a family of sparse random CMV (named after Cantero, Moral, and Vel\'azquez \cite{CMV}) matrix models $\{\Circ_{n,\beta},n\ge 1\}$ in terms of the Verblunsky coefficients so that the resulting matrix has a joint eigenvalue density given by \eqref{eq:cbe}. See Section \ref{sec:background} for an overview of the CMV construction.

	In \cite{KS} Killip and Stoiciu showed that under the same scaling as in the unitary case, the circular beta ensemble has a point process limit, called the Sine$_\beta$ point process. They showed that the Sine$_\beta$ process can be characterized via its counting function using a system of coupled stochastic differential equations. Using the random operator framework, in a series of works \cite{BVBV_op,BVBV_19,BVBV_szeta} Valk\'o and Vir\'ag showed the operator level convergence of the circular beta ensembles with an explicit rate of convergence, and proved that the normalized characteristic polynomials of the circular beta ensembles converge to the random analytic functions $\zeta_\beta(\cdot)$ with zero set given by the $\Sineb$ process. See also \cite{DE,ES,RRV,BVBV,RR} for a development of the matrix models and local scaling limits for beta ensembles on the real line.

	In this paper, we focus on the \emph{rank-one truncation} of the circular beta ensembles obtained by removing the first row and column of the matrix model $\Circ_{n,\beta}$. Random unitary matrices can be viewed as evolution operators of closed quantum systems. From this perspective, truncations of unitary matrices obtained by removing a fixed number of rows and columns, naturally arise as models for open quantum systems in the regime of weak non-Hermiticity. We refer to the surveys \cite{FyodorovSommers2003,Forrester_2023} and the references therein for a detailed discussion of rank-one perturbations of random matrix ensembles. 
	
	The main goal of this project is to understand the scaling limit of the Blaschke product of the (rank-one) truncated circular beta ensembles in the bulk regime, thereby answering a question posed in \cite{LV_truncated}. 
	Before presenting the main result, we begin with a review of the known results in the $\beta=2$ case. Throughout the paper, for a square unitary matrix $\bf{M}$ we denote by $\trunc{\bf{M}}$ the matrix obtained by removing the first row and column from $\bf{M}$. Since the truncation map corresponds to a projection, the eigenvalues of the resulting matrix $\trunc{\bf{M}}$ have modulus less than or equal to one. 
	
	Consider the case when ${\bf M}_{n+1}$ is a uniformly chosen $(n+1)\times(n+1)$ unitary matrix with respect to the Haar measure on the unitary group $U(n+1)$. In \cite{ZS} {\. Z}yczkowski and Sommers proved that the joint eigenvalue density of $\trunc{{\bf M}_{n+1}}$ is given by
	\begin{equation}\label{eq:tcue}
		\frac{1}{\pi^n}\prod_{1\le j<k\le n}|z_j-z_k|^2 , \qquad z_j\in \D=\{z\in\C:|z|<1\},
	\end{equation}
	with respect to the Lebesgue measure in the unit disk. Note that {\. Z}yczkowski and Sommers \cite{ZS} also computed the joint eigenvalue distribution for the general finite rank truncation of CUE.

	From the Vandermonde structure in \eqref{eq:tcue}, one can show that the point process distributed according to \eqref{eq:tcue} is a determinantal point process with kernel function given by
	\begin{align}\label{eq:det_tcue}
		K_{\trunc{{\bf M}_{n+1}}}(z,w)=\frac{1}{\pi}\sum_{k=0}^{n-1} (k+1) z^k \bar w^k,\qquad z,w\in\D
	\end{align}
	with respect to the Lebesgue measure on the unit disk. Taking the $n\to\infty$ limit of the kernel function \eqref{eq:det_tcue} leads to the classical Bergman kernel on the unit disk
	\begin{align}\label{eq:Bergman_kernel}
		K_{\textup{Bergman}}(z,w)=\frac{1}{\pi(1-z \bar w)^2},\qquad z,w\in\D.
	\end{align}
	The Bergman kernel is related to the study of random analytic functions as follows. 
	Let $\{\xi_n,n\ge 0\}$ be a sequence of i.i.d.~standard complex Gaussian variables. Throughout the paper, a complex Gaussian variable $\xi$ is called standard if 
	$$
	\mathbb{E}[\xi] = \mathbb{E} [\xi^2] = 0, \; \mathbb{E}[|\xi|^2] = 1.$$
	We define the random Gaussian analytic function (GAF) formally as the random series
	\begin{align}\label{eq:GAF}
		f_{\textup{GAF}}(z)=\sum_{n=0}^\infty \xi_n z^n,\quad z\in\D.
	\end{align}
	In the seminal work \cite{PV}, Peres and Vir\'ag showed that the zero set of $f_{\textup{GAF}}(\cdot)$ is a determinantal point process with kernel function given by $K_{\text{Bergman}}(z,w)$. 
	
	Putting these results together implies that the eigenvalues of the truncated CUE converge in distribution \emph{without any additional scaling} to the zeros of $f_{\textup{GAF}}(\cdot)$. We call this the \emph{bulk scaling limit} of the truncated CUE. This convergence was later strengthened in Krishnapur \cite{Krishnapur} to the convergence of the Blaschke product of the truncated CUE.
	
	\begin{thmx}[\cite{Krishnapur}]\label{thm:Krishnapur}
		Let $\Lambda_n=\{z_1,z_2,\dots,z_n\}$ be the eigenvalues of the rank-one truncation of the $(n+1)\times (n+1)$ CUE, or equivalently the eigenvalues of $\trunc{\Circ_{n+1,2}}$, and let 
		\begin{align*}
			B_n(z):=\prod_{k=1}^n \frac{z-z_k}{1-z\bar z_k}
		\end{align*}
		be the Blaschke product of the eigenvalues. Then
		\begin{equation*}
			\sqrt{n}B_n(z)\Rightarrow f_{\textup{GAF}}(z)\qquad \text{in distribution as $n\to\infty$ }
		\end{equation*}
		with respect to the uniform-on-compact topology on $\D$.
	\end{thmx}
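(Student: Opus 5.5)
Since the eigenvalues of $\trunc{\Circ_{n+1,2}}$ lie in $\D$, $|B_n|\le 1$ on $\D$, so we need a mechanism that produces Gaussian coefficients. I would use the characteristic function of a rank-one truncation. Write ${\bf M}={\bf M}_{n+1}$ in block form
\[
{\bf M}=\mat{a}{b^*}{c}{A},
\]
where $A=\trunc{{\bf M}}$. Since ${\bf M}$ is unitary, the Schur complement gives
\[
\det(I-zA)\,\bigl(a+z\,b^*(I-zA)^{-1}c\bigr)=\det(zI-{\bf M})\cdot(\text{sign/normalization}).
\]
Equivalently, $B_n(z)$ is the Schur function of the spectral measure of ${\bf M}$ at $e_1$: the Blaschke product of $\trunc{{\bf M}}$ is the characteristic function of the contraction, $\Theta(z)=-a+z\,b^*(I-z A^*)^{-1}c$ up to a unimodular factor. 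For a completely non-unitary contraction with defect indices $1$ this is inner with zeros at the eigenvalues. Hence $B_n(z)=\det(z-A)/\det(I-zA^*)$ up to a unimodular factor, and the identity $\det(z-A)=\det(I-zA^*)\,\Theta(z)$ expresses $B_n$ through entries of ${\bf M}$.

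The plan is then to obtain a concrete random power series. I would expand
\[
\Theta(z)=-a+\sum_{k\ge0} z^{k+1}\, b^*(A^*)^k c,
\]
and compute the joint law of the coefficients $(a, b^*c, b^*A^*c,\dots)$ under Haar measure. A cleaner route is to use the CMV/Verblunsky representation. For the Haar measure on $U(n+1)$, the spectral measure at $e_1$ has Verblunsky coefficients $\alpha_0,\dots,\alpha_{n}$ that are independent, with $|\alpha_k|^2\sim\mathrm{Beta}(1,n-k)$ and uniform phase (Killip--Nenciu), with $|\alpha_n|=1$. The Schur algorithm gives the Schur function $\Theta$ as a composition of Möbius maps $\Theta_k(z)=\frac{\alpha_k+z\Theta_{k+1}(z)}{1+\bar\alpha_k z\Theta_{k+1}(z)}$ (with the appropriate conjugations). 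Truncating at level $n$ yields exactly $B_n$ up to a unimodular constant. Rescaling, $\sqrt{n}\,\alpha_k\to\xi_k$ for each fixed $k$, jointly and independently. Iterating the recursion and keeping first-order terms gives
\[
\sqrt n\,B_n(z)\approx\sum_{k=0}^{m}\sqrt n\,\alpha_k z^k+\sqrt n\,z^{m+1}\Theta_{m+1}(z)\prod(\cdots),
\]
since the products $\bar\alpha_j\alpha_k$ are $O(1/n)$. For each fixed $m$, the first $m+1$ Taylor coefficients of $\sqrt n B_n$ then converge to $(\xi_0,\dots,\xi_m)$. In fact the exact identity is $\Theta_k=\alpha_k+z(1-|\alpha_k|^2)\Theta_{k+1}/(1+\bar\alpha_k z\Theta_{k+1})$.

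The remaining step is tightness in the uniform-on-compact topology, which together with the convergence of finite-dimensional Taylor coefficients identifies the limit as $f_{\textup{GAF}}$. By Cauchy estimates it suffices to show $\sup_n\ev\bigl[\sup_{|z|\le r}|\sqrt n B_n(z)|^2\bigr]<\infty$ for each $r<1$. Equivalently, the Taylor coefficients $c_k^{(n)}$ of $\sqrt nB_n$ should satisfy $\ev|c_k^{(n)}|^2\le C$ uniformly in $n$ and $k$, which gives $\ev\sum_k|c_k^{(n)}|^2r^{2k}\le C/(1-r^2)$. I expect this to be the main obstacle, because high-order coefficients mix nonlinear products of many $\alpha$'s. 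My first attempt would use rotation invariance: the phases of the $\alpha_k$ are independent and uniform, which makes the coefficients orthogonal-like. Combining this with $|\Theta_{k}|\le1$ and a direct second-moment bound for $\ev|\sqrt n B_n(z)|^2$ on circles $|z|=r$ (via the recursion, which gives $\ev|\Theta_k(z)|^2\lesssim \frac{1}{n-k}+|z|^2\ev|\Theta_{k+1}|^2$ for $k\le n/2$, with the trivial bound $1$ beyond) should control the tail. Specifically, for $k>n/2$ the factor $r^{n/2}\sqrt n$ is negligible, so an $L^2$ bound on circles follows by induction, and subharmonicity of $|B_n|^2$ upgrades it to a sup bound on smaller disks.
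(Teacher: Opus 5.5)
Your argument is correct. It is closer to the paper than it looks, because your Schur function is the paper's Blaschke product run in the opposite index direction.

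\textbf{Same recursion.} Take the Killip--Kozhan reversal $\rev\alpha_k=-\alpha_n\overline{\alpha_{n-1-k}}$ for $0\le k\le n-1$, and build $B_k$ from the paper's recursion $B_{k+1}=(zB_k-\overline{\rev\alpha_k})/(1-\rev\alpha_k zB_k)$ with $B_0=1$. Then $g_k=\alpha_nB_k$ satisfies $g_{k+1}=(\alpha_{n-1-k}+zg_k)/(1+\overline{\alpha_{n-1-k}}\,zg_k)$ and $g_0=\alpha_n$. Hence $\Theta_{n-k}=g_k$ and $\Theta_0=\alpha_nB_n$.
\begin{itemize}
\item This check is a cleaner justification of your identification than the characteristic-function discussion, which you can drop. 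Its displayed determinant identity is not right as written: the two sides have different degrees in $z$.
\item It also shows your tightness step is the paper's. Your bound $|\Theta_k|\le|\alpha_k|+|z||\Theta_{k+1}|$ is \eqref{eq:B_rec_bound} re-indexed.
\item Iterating it pathwise to $\|\Theta_0\|_r\le\sum_{k<m}r^k|\alpha_k|+r^m$ with $m\approx n/2$ and then applying Minkowski, as the paper does, is cleaner than squaring. Squaring produces a cross term, so $\ev|\Theta_k|^2\lesssim\frac{1}{n-k}+|z|^2\ev|\Theta_{k+1}|^2$ does not hold with constant $1$ on the last term. The induction still closes with a bound of order $(1-r)^{-2}$.
\end{itemize}

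\textbf{Where you differ: the Taylor coefficients.} The paper expands $\sqrt nB_n=-\sum_j\sqrt n\,\overline{\rev\alpha_{n-1-j}}\,(\Phi^*_{n-1-j}/\Phi^*_n)\,z^j$. Since $\overline{\rev\alpha_{n-1-j}}=-\overline{\alpha_n}\alpha_j$, its leading terms are exactly your $\overline{\alpha_n}\sqrt n\,\alpha_j z^j$. The paper then couples all $n$ on one space and uses the Chhaibi--Najnudel martingale convergence $\Phi_n^*\to\Phi_\infty^*$ to send the ratios to $1$.

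You instead use that the first $m+1$ coefficients of $\Theta_0$ are polynomials in $\alpha_0,\dots,\alpha_m$ and their conjugates. The linear part of the $k$-th one is $\alpha_k$. Every other monomial has degree at least $3$, because the covariance $\alpha_j\mapsto e^{i\theta}\alpha_j\Rightarrow\Theta_0\mapsto e^{i\theta}\Theta_0$ forces one more $\alpha$ than $\bar\alpha$ in each monomial. So after multiplying by $\sqrt n$ the corrections are $O_P(n^{-1})$.

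\textbf{What each buys.} Your route is more elementary and self-contained: it needs no martingale or CLT input, and it works verbatim for general $\beta$ with $|\alpha_k|^2\sim\mathrm{Beta}(1,\frac{\beta}{2}(n-k))$. The paper's coupling across $n$ is what it reuses in Section~\ref{sec:lss} for asymptotic independence from $\Phi_n^*$ and for the limit of $\sqrt n\Phi_n$.

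\textbf{Two points to state explicitly.}
\begin{itemize}
\item The unimodular factor is $\overline{\alpha_n}$, which is uniform and independent of $\alpha_0,\dots,\alpha_{n-1}$. So the limiting coefficients $\overline{\alpha_n}\,\xi_k$ are still i.i.d.\ standard complex Gaussian.
\item A uniform bound on $\ev|c_k^{(n)}|^2$ is only a sufficient condition for the sup bound you prove, not an equivalent one.
\end{itemize}
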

	
	Note that \cite{Krishnapur} also provides a generalization of the work \cite{PV} by connecting the limit of the eigenvalues of the finite-rank truncation of CUE with the singular points of a matrix-valued Gaussian analytic function. 
	
	% In this project, we will investigate the scaling limit of the characteristic polynomials of the general circular beta ensemble in the rank-one truncation regime. We hope to explore higher-rank truncation in future work.

	In \cite{KK} Killip and Kozhan  studied the truncated circular beta ensemble obtained by removing the first row and column of the matrix models $\Circ_{n+1,\beta}$ of Killip-Nenciu \cite{KillipNenciu}. They constructed a sparse matrix model with spectrum distributed as that of $\trunc{\Circ_{n+1,\beta}}$, and showed that the  joint eigenvalue density of $\trunc{\Circ_{n+1,\beta}}$  with respect to the Lebesgue measure on the unit disk is given by
	\begin{equation}\label{eq:tcbe}
		\frac{\beta^{n}}{(2\pi)^{n}}\prod_{1\le j,k\le n}|1-z_j\bar z_k|^{\frac{\beta}{2}-1}\prod_{j<k\le n}|z_j-z_k|^2, \qquad z_j\in \D.
	\end{equation}
	We call the resulting distribution the size $n$ \emph{truncated circular beta ensemble}. When $\beta=2$, equation \eqref{eq:tcbe} recovers the joint eigenvalue density \eqref{eq:tcue}. However, for general $\beta>0$ the truncated circular beta ensemble does not have a nice algebraic structure.

	In \cite{LV_truncated} Li and Valk\'o derived the edge scaling limit of the normalized characteristic polynomials and the point process limit of the eigenvalues near the point 1 on the boundary of the spectrum. They characterized the limiting point process as the zero set of certain random analytic function, which is related to the stochastic zeta function $\zeta_\beta(\cdot)$ constructed in \cite{BVBV_szeta}. Their approach relies on the matrix model representation of the full and truncated circular beta ensembles \cite{KillipNenciu,KK}, and the random operator framework for circular ensembles developed by Valk\'o and Vir\'ag \cite{BVBV_op,BVBV_19,BVBV_szeta}. 
	
	In \cite{LV_truncated}, Li and Valk\'o also raised the question (Problem 1 in Section 8.2) of whether the characteristic polynomial of the truncated circular beta ensemble has a nice scaling limit in the bulk regime of the spectrum. Our main result fully answers this question and can be viewed as a beta-generalization of Theorem 5 of Krishnapur \cite{Krishnapur} (see Theorem A for the statement) in the rank-one truncation regime.

	Recall the Gaussian analytic function $f_{\textup{GAF}}(\cdot)$ defined in \eqref{eq:GAF}. 
	\begin{theorem}\label{thm:main}
		Fix $\beta>0$. Let $\Lambda_n=\{z_1,z_2,\dots,z_n\}$ be  distributed as the size $n$ truncated circular beta ensemble, and let 
		\begin{align*}
			B_n(z)\equiv B_{n,\beta}(z):=\prod_{k=1}^n \frac{z-z_k}{1-z\bar z_k}
		\end{align*}
		be the corresponding Blaschke product. Then we have
		\begin{equation}\label{eq:tcbe_char_conv}
			\sqrt{n}B_n(z)\Rightarrow \sqrt{\tfrac{2}{\beta}}f_{\textup{GAF}}(z) \qquad \text{in distribution as $n\to\infty$ }
		\end{equation}
		with respect to the uniform-on-compact topology on $\D$.
	\end{theorem}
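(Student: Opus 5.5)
The plan is to use the Killip–Kozhan sparse matrix model for $\trunc{\Circ_{n+1,\beta}}$ to write $B_n$ as the output of a random Schur-type (Möbius) recursion driven by independent rotation-invariant coefficients. I would then show that, after multiplication by $\sqrt n$, the recursion is asymptotically linear, so that $\sqrt n B_n$ is close to a Gaussian power series.

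\textbf{Step 1: algebraic representation.} Recall from Section~\ref{sec:background} that $\Circ_{n+1,\beta}$ is the CMV matrix built from independent Verblunsky coefficients $\alpha_0,\dots,\alpha_n$. Here $\alpha_k$ is rotation invariant in $\D$ with $|\alpha_k|^2\sim\mathrm{Beta}(1,\tfrac\beta2(n-k))$ for $k<n$, and $\alpha_n$ is uniform on the circle. I would express $\det(z-\trunc{\Circ_{n+1,\beta}})$ and $\det(1-z\,\trunc{\Circ_{n+1,\beta}}^*)$ through the Szegő recursion applied to the coefficients with the first one deleted. Their ratio $B_n$ then becomes the Schur function (or its reverse) of a finite Blaschke product. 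Concretely, I expect a recursion of the form
\begin{equation*}
b_0 \text{ unimodular},\qquad b_{k+1}(z)=\frac{z\,b_k(z)+\gamma_k}{1+\bar\gamma_k z\, b_k(z)},\qquad B_n\ed b_n ,
\end{equation*}
up to a harmless unimodular factor. The $\gamma_k$ are independent, rotation invariant, and of size $|\gamma_k|^2\approx \frac{2}{\beta(k+1)}$, so the coefficients entering last are the largest. Since the exact normalization has to be read off from the model in Section~\ref{sec:background}, this step must be checked carefully. Rotation invariance of each $\gamma_k$, independent of the past, makes $e^{i\phi}b_k$ equal in law to $b_k$ jointly with the future.

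\textbf{Step 2: linearization.} Expanding the Möbius map gives
\begin{equation*}
b_{k+1}=z b_k+\gamma_k-\bar\gamma_k z^2 b_k^2+O\big(|\gamma_k|^2|b_k|\big).
\end{equation*}
Iterating from $k=n-1$ downward, with $m=n-1-k$,
\begin{equation*}
b_n(z)=\sum_{m=0}^{n-1} \gamma_{n-1-m}\, z^m + z^n b_0 + R_n(z).
\end{equation*}
For fixed $m$, $\sqrt n\,\gamma_{n-1-m}$ converges to a complex Gaussian with variance $2/\beta$, since $\tfrac{\beta}{2}n|\gamma|^2\to\mathrm{Exp}(1)$ and the phase is uniform. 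The variables are independent, so the linear part times $\sqrt n$ converges to $\sqrt{2/\beta}\,f_{\textup{GAF}}$. This convergence is uniform on compacts via $L^2$ bounds: $\sum_m n\,\mathbb{E}|\gamma_{n-1-m}|^2|z|^{2m}$ is uniformly bounded for $|z|\le r<1$. The term $\sqrt n z^n b_0$ vanishes on compacts.

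\textbf{Step 3: the remainder (main obstacle).} I must show that $\sqrt n R_n\to0$ uniformly on compacts. The difficulty is that $b_k$ is not small for small $k$, because early $\gamma_k$ are of order one. However, those terms are multiplied by $z^{n-k}$, which is geometrically small.

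I would split the indices at $k=n-L$ with $L$ large. For $k<n-L$, crude bounds suffice: $|b_{n-L}|\le1$, and the map from $b_{n-L}$ to $b_n$ has derivative bounded by $|z|^L$ times a bounded random factor. This gives a contribution of order $r^L$, but that is not yet enough once multiplied by $\sqrt n$.

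So the genuine work is an $L^2$ martingale estimate using rotation invariance. Each error term $\bar\gamma_k z^2 b_k^2$ has conditional mean zero given the past, and its second moment is at most $\mathbb{E}|\gamma_k|^2\,\mathbb{E}|b_k|^4$. I would first prove inductively that $\mathbb{E}|b_k(z)|^2\lesssim \frac{1}{n-k}+|z|^{2(n-k)}$-type bounds hold for $k$ near $n$. With these bounds, the summed errors propagated by factors $z^{n-k}$ give $n\,\mathbb{E}|R_n(z)|^2\to0$ pointwise for $|z|\le r$.

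Finally, I would upgrade the pointwise $L^2$ control to uniform-on-compact convergence. Since everything is analytic, I would bound $\sup_{|z|\le r}$ by the $L^2$ norm on a slightly larger circle via Cauchy's formula, and obtain tightness from the same bound. Together with convergence of finite-dimensional distributions from Step 2, this yields \eqref{eq:tcbe_char_conv}.
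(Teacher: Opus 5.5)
\textbf{Verdict: correct in outline, different route.} Your argument works, but it differs from the paper's. The paper never isolates a nonlinear remainder. It uses the exact identity $\sqrt n B_n=-\sum_{j}\sqrt n\,\bar\alpha_{n-1-j}\,\frac{\Phi^*_{n-1-j}}{\Phi^*_n}\,z^j$. It gets convergence of each fixed Taylor coefficient from $\Phi^*_{n-1-j}/\Phi^*_n\to 1$, which it deduces from the almost sure convergence of the martingale $\log\Phi^*_k$ on the coupled probability space. That convergence is an external, nontrivial input from Chhaibi--Najnudel. Tightness is then proved separately from the contraction bound $\|B_n\|_r\le r^n+\sum_j r^j|\alpha_{n-1-j}|$.

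You instead compare $\sqrt n B_n$ directly with the linear series $\sqrt n\sum_m\gamma_{n-1-m}z^m$, whose terms are independent. This bypasses both the martingale convergence and the coupling across $n$, is self-contained, and gives a rate. The paper's route is shorter once the cited result is granted. It also makes the link with $\Phi^*_\infty$ explicit, which the paper reuses in Section 4 for the asymptotic independence of $\Phi_n^*$ and the bulk zeros.

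\textbf{Two fixes needed in Step 3.} Neither is serious, but both should be corrected.

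\emph{The a priori bound is mis-indexed.} The law of $b_k$ does not depend on $n$, and what you need is $\mathbb{E}\|b_k\|_r^2\lesssim (k+1)^{-1}(1-r)^{-2}$, not $\frac{1}{n-k}+|z|^{2(n-k)}$. The latter is trivial for $k$ near $n$. With it, the terms $k\approx n$ in your $L^2$ sum would contribute $O(1)$ after multiplying by $n$, so the estimate would fail. Relatedly, the coefficients entering last are the smallest, not the largest. The correct bound follows at once from $|b_{k+1}|\le |z||b_k|+|\gamma_k|$ and Minkowski, exactly as in the paper's tightness proof.

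\emph{The error is not conditionally centered.} Only the term $-\bar\gamma_k z^2 b_k^2$ has conditional mean zero. The full error has conditional mean $-\mathbb{E}|\gamma_k|^2\, z b_k\neq 0$.

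\textbf{No martingale cancellation is needed.} The exact error is $E_k=-\bar\gamma_k z b_k(z b_k+\gamma_k)/(1+\bar\gamma_k z b_k)$. On $|z|\le r$ you have $|1+\bar\gamma_k z b_k|\ge 1-r$, and $\gamma_k$ is independent of $b_k$. Hence $\mathbb{E}\|E_k\|_r\lesssim_r (k+1)^{-3/2}$, and therefore $\mathbb{E}\|\sqrt n R_n\|_r\lesssim_r \sqrt n\sum_{k<n} r^{n-1-k}(k+1)^{-3/2}=O(1/n)$. This holds directly in the sup norm, so the remainder does not need the Cauchy-formula upgrade.
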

	
	Since $\beta$ appears as a multiplicative constant in \eqref{eq:tcbe_char_conv}, we obtain the bulk scaling limit of the truncated circular beta ensemble as follows.
	\begin{corollary}\label{cor:bulk}
		Let $\Lambda_n=\{z_1,z_2,\dots,z_n\}$ be distributed as a size $n$ truncated circular beta ensemble. Let $\Lambda=\{z\in\D:f_{\textup{GAF}}(z)=0\}$, then 
		$\Lambda_n\Rightarrow \Lambda$ as $n\to\infty$, in the vague topology on locally finite point measures on $\D$.
	\end{corollary}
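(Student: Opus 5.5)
The corollary follows from Theorem~\ref{thm:main} by a continuous-mapping argument applied to zero sets of analytic functions. The zeros of $B_n$ in $\D$ are exactly the points of $\Lambda_n$, with multiplicity: each factor $(1-z\bar z_k)^{-1}$ is analytic and nonvanishing on $\D$ since $|z_k|<1$. Multiplying by the constant $\sqrt{n}$ does not change the zero set. Likewise, the zero set of $\sqrt{2/\beta}\,f_{\textup{GAF}}$ coincides with $\Lambda$. So it suffices to show that the map sending a nonzero analytic function on $\D$ to its zero counting measure is continuous, at almost every realization of the limit, for the uniform-on-compact topology.

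\textbf{Step 1: use Skorokhod's representation.} The space $H(\D)$ of analytic functions on $\D$ with the uniform-on-compact topology is Polish. We may therefore realize $\sqrt{n}B_n \to g := \sqrt{2/\beta}\,f_{\textup{GAF}}$ almost surely, uniformly on compacts.

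\textbf{Step 2: apply Hurwitz's theorem.} Almost surely $g\not\equiv 0$, since for instance $g(0)=\sqrt{2/\beta}\,\xi_0\ne0$ a.s. Fix a compactly supported continuous test function $\varphi$ on $\D$, and choose $r<1$ with $\operatorname{supp}\varphi\subset\{|z|<r\}$. Almost surely $g$ has no zeros on the circle $|z|=r$; otherwise the radius can be perturbed slightly, since the zeros are countable. Hurwitz's theorem (equivalently, the argument principle) then gives the following for all large $n$. The zeros of $\sqrt{n}B_n$ in $\{|z|<r\}$ converge, with multiplicity, to the zeros of $g$ there. Concretely, for each small disk around a zero of $g$, the number of zeros of $B_n$ inside eventually equals the multiplicity of that zero, and there are eventually no zeros elsewhere in $\{|z|\le r\}$. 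Hence $\sum_{z\in\Lambda_n}\varphi(z)\to\sum_{z\in\Lambda}\varphi(z)$ almost surely. Vague convergence holds by testing against a countable convergence-determining family of such $\varphi$.

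\textbf{Step 3: pass back to distributions.} Almost sure convergence on the Skorokhod space implies $\Lambda_n\Rightarrow\Lambda$ in law in the vague topology. Scaling by $\sqrt{2/\beta}$ does not affect zeros, so the limit is exactly the zero set of $f_{\textup{GAF}}$. By Peres--Vir\'ag~\cite{PV}, this is the determinantal process with the Bergman kernel \eqref{eq:Bergman_kernel}.

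The only delicate point is Step 2: ensuring the limit is almost surely not identically zero and handling zeros on the boundary circle. Both are dealt with above by the non-degeneracy of $\xi_0$ and the freedom to choose $r$. The real work lies entirely in Theorem~\ref{thm:main}, which we assume.
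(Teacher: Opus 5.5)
Your proposal is correct and follows essentially the same route as the paper, which deduces the corollary from Theorem~\ref{thm:main}, the continuity of the zero-counting map on $\cH\setminus\{0\}$ given by Hurwitz's theorem, and the continuous mapping theorem, using that the limit is almost surely not identically zero. Your Skorokhod-representation step simply carries out that continuous-mapping argument by hand.
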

	Corollary \ref{cor:bulk}, together with the work of Peres and Vir\'ag \cite{PV}, implies that the limiting point process is determinantal with $K_{\textup{Bergman}}(\cdot,\cdot)$ with respect to the Lebesgue measure on $\D$, and is independent of $\beta$. More precisely, the $k$-th joint intensity of the limiting process is given by
	\begin{align*}
		\rho_k(z_1,z_2,\dots, z_k) = \pi^{-k}\det\left[\frac{1}{(1-z_i\bar z_j)^2}\right]_{1\le i,j\le k}, \qquad z_j\in\D.
	\end{align*}
	Using the determinantal structure, one can compute the asymptotics of the hole probability as follows. Let $\Lambda_n$ be distributed as the size $n$ truncated circular beta ensemble, and for $0< r< 1$ let $\mathbb{D}_r=\{z\in\C:|z|< r\}$, then we have
	\begin{align*}
		\lim_{n\to\infty}\pr(\Lambda_n\cap \D_r=\emptyset) = \pr(\Lambda\cap \D_r=\emptyset) = \prod_{k=1}^\infty (1-r^{2k}).
	\end{align*}
	As $r\uparrow 1$, the hole probability tends to $\exp(-\frac{\pi^2+o(1)}{12(1-r)})$, see Corollaries 5.1.7 and 5.1.8 in \cite{HKPV} for more details.
	
	On the other hand, the finite size truncated circular beta ensemble distributed according to \eqref{eq:tcbe} has no known determinantal structure for general $\beta\neq 2$. We prove that the linear statistics of compactly supported functions of the truncated circular beta ensemble are asymptotically independent from the so-called reversed characteristic polynomial, see Section \ref{sec:lss} and Proposition \ref{prop:asymp_ind} for more details.

	Our approach also applies to the rank-one multiplicative perturbation of the circular beta ensemble. For a square matrix ${\bf M}$, consider its multiplicative perturbation
	\begin{align}\label{def:mult_pert}
		{\bf M}^{[r]}:={\bf M}\cdot \diag(r,1,1,\dots,1), \quad r\in[-1,1].
	\end{align}
	Note that when $r=0$ the eigenvalues of ${\bf M}^{[r]}$ consist exactly of the eigenvalues of the truncated matrix $\trunc{{\bf M}}$ plus an additional eigenvalue at $0$, whereas when $r=1$ we simply have ${\bf M}^{[r]}={\bf M}$. Hence, as $r$ changes from $0$ to $1$, the perturbed model ${\bf M}^{[r]}$ can be viewed as an interpolation between the truncated and unperturbed model. The joint eigenvalue distribution of ${\bf M}^{[r]}$ in the case when ${\bf M}$ is a random Haar unitary matrix was derived in \cite{Fyodorov2001}.
	In \cite{ForIpsen}, Forrester and Ipsen studied the $n\to\infty$ limit of the rank-one multiplicative perturbation of the CUE in the regime when $r=r_n=O(n^{-1/2})$. They characterized the limiting point process as the zeros of a certain random Laurent series, and calculated the one- and two-point correlations of the limiting process. 
	
	For general $\beta\neq 2$, one may define the rank-one multiplicative perturbation of the circular beta ensemble as the spectrum of $\Circ_{n,\beta}^{[r]}$ via \eqref{def:mult_pert}. The joint eigenvalue distribution of $\Circ_{n,\beta}^{[r]}$ was derived in Killip-Kozhan \cite{KK}. Using the perturbed matrix model of Killip and Kozhan, and a similar argument as in the proof of Theorem \ref{thm:main}, we obtain the scaling limit of the Blaschke product of the rank-one multiplicative perturbation of the circular beta ensemble in the mesoscopic regime $r_n=O(n^{-1/2})$. 
	
	\begin{theorem}\label{thm:perturb}
		Fix $\beta>0$, $r\in\R$ and set $r_n=\sqrt{\frac{2}{\beta n}}r$. For all sufficiently large $n$ let $B_{n,\beta}^{[r_n]}(z):=\prod_{k=1}^n \frac{z-z_k}{1-z\bar z_k}$ be the Blaschke product of the eigenvalues of the multiplicatively perturbed model $\Circ_{n,\beta}^{[r_n]}$. Then we have
		\begin{equation}\label{eq:perturb_conv}
			\sqrt{n}B_{n,\beta}^{[r_n]}(z)\Rightarrow \sqrt{\frac{2}{\beta}}\left( r \eta +\sum_{j=1}^\infty \xi_j z^j\right), \qquad \text{in distribution as $n\to\infty$}
		\end{equation}
		with respect to the uniform-on-compact topology on $\D$. Here $\xi_j,j\ge 1$ is a sequence of i.i.d.~standard complex normal random variables, and $\eta$ is an independent, uniform random variable on the unit circle. 
	\end{theorem}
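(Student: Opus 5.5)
We will follow the same route as the proof of Theorem~\ref{thm:main}, now using the Killip--Kozhan matrix model for the perturbed CMV matrix. Write $\Circ_{n,\beta}$ in terms of its Verblunsky coefficients $\alpha_0,\dots,\alpha_{n-2}$, which are independent with $|\alpha_k|^2\sim\mathrm{Beta}(1,\tfrac{\beta}{2}(n-k-1))$ and rotation-invariant phases, together with the boundary coefficient $\alpha_{n-1}=\eta$, which is uniform on the unit circle. Multiplying by $\diag(r,1,\dots,1)$ changes only the first column. By the Killip--Kozhan computation, the characteristic polynomial of $\Circ_{n,\beta}^{[r]}$ can be written through the Szeg\H{o} recursion. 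Its reversed form is $\Phi_n^{*}(z)-r\,z\Phi_n(z)$, suitably adjusted, where $\Phi_n$ is built from the Verblunsky coefficients (the roles of $\alpha_0$ and $\eta$ are explained below). In the truncated case $r=0$ this reduces to the representation used for Theorem~\ref{thm:main}. There the Blaschke product is $B_n(z)=z\,\Phi^{\mathrm{tr}}_{n-1}(z)/\Phi^{\mathrm{tr},*}_{n-1}(z)$ up to a unimodular factor, and the limit came from $\sqrt n\,\Phi_{n-1}(z)/\Phi_{n-1}^*(z)$.

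\textbf{Step 1: an algebraic identity.} We will derive an identity of the form
\[
B^{[r]}_{n,\beta}(z)=\frac{z\,b_n(z)+ r\,c_n}{1+ r\,\overline{c_n}\,z\,b_n(z)}\quad\text{(up to a unimodular constant)},
\]
where $b_n$ is the Blaschke-type ratio arising in the truncated case, whose asymptotics $\sqrt n\, z b_n(z)\Rightarrow\sqrt{2/\beta}\sum_{j\ge1}\xi_j z^j$ we extract from the proof of Theorem~\ref{thm:main}, and $c_n$ is a unimodular variable built from $\eta$. We will use this identity as a Möbius transform of the truncated Blaschke product. The identity should come from writing the eigenvalue equation of the rank-one multiplicative perturbation as $\det(z-\mathbf M D_r)=0$ and using the Schur complement with respect to the first coordinate. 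That gives $\det(z-\mathbf M D_r)=\det(z-\trunc{\mathbf M})\bigl(z-r\,m_{11}-r\langle\cdot\rangle\bigr)$. The last factor is expressed via the first-row/column Schur function, which is the ratio of the truncated characteristic polynomials.

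\textbf{Step 2: independence of the boundary coefficient.} We will check that $c_n$ is asymptotically independent of the truncated Blaschke product. The natural choice is $c_n$ equal to $\eta$ times a phase measurable with respect to the remaining coefficients. Since $\eta$ is uniform and independent of everything else, $c_n$ is exactly uniform and independent of $b_n$. This is what produces the independent $\eta$ in \eqref{eq:perturb_conv}.

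\textbf{Step 3: passing to the limit.} With $r_n=\sqrt{2/(\beta n)}\,r$ we have
\[
\sqrt n\,B^{[r_n]}=\frac{\sqrt n\, z b_n+\sqrt{2/\beta}\,r\,c_n}{1+O(n^{-1/2})\,|\sqrt n z b_n|/\sqrt n}.
\]
The denominator tends to $1$ uniformly on compacts, because $\sqrt n\,z b_n$ is tight in the uniform-on-compact topology by the proof of Theorem~\ref{thm:main}. The joint convergence of $(\sqrt n\,z b_n,c_n)$ follows from the independence in Step~2. Slutsky's lemma and the continuous mapping theorem then give \eqref{eq:perturb_conv}.

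\textbf{Main obstacle.} We expect the hard part to be Step~1: pinning down exactly which object plays the role of $b_n$. The truncated model of Killip--Kozhan uses the coefficients in reversed order, with the first coefficient $\alpha_0$ and the boundary $\eta$ entering in a specific way. So we must verify that the $b_n$ appearing in the Möbius identity has the same law, jointly as a process, as the object whose limit was established for Theorem~\ref{thm:main}, and that the term absorbing $\eta$ is genuinely a free uniform phase. If the direct Schur-complement identity is awkward, the fallback is to rerun the martingale/recursion argument of Theorem~\ref{thm:main} with modified initial condition $r_n$ in place of $0$. The initial condition then propagates linearly, and contributes the constant term $r\eta$ in the limit.
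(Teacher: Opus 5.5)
Your architecture matches the paper's: write the perturbed Blaschke product as a M\"obius image of $z$ times a truncated Blaschke product with an independent uniform phase, then apply Slutsky. The identity you guess in Step~1 is exactly right, with $b_n=B_{n-1}$, $c_n=-\bar\eta$ and no extra unimodular factor. But the proposal has a genuine gap at Step~1, which is where the content lies, and the Schur-complement route you propose does not close it. The Schur complement only gives $\det(z-\mathbf M D_r)=\det(z-T)\,(z-r\,h(z))$, where $T=\trunc{\mathbf M}$ and $h(z)=m_{11}+m_{12}^\top(z-T)^{-1}m_{21}$. Equivalently, by multilinearity in the first column, $\det(z-\mathbf M D_r)=(1-r)\,z\det(z-T)+r\det(z-\mathbf M)$. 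To reach your identity you still need $\det(z-\mathbf M)=z\Phi_{n-1}(z)-\bar\eta\,\Phi_{n-1}^*(z)$, with $\Phi_{n-1}=\det(z-T)$ and $\eta$ uniform and independent of the spectrum of $T$. That is exactly the Killip--Kozhan input; it does not follow from the Schur complement.

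Step~2 has a related flaw. You start from the original coefficients of $\Circ_{n,\beta}$. There, Proposition~\ref{prop:CMV3} shows that $\trunc{\Circ_{n,\beta}}$ has the spectrum of the CMV matrix with coefficients $-\eta\bar\alpha_{n-2},\dots,-\eta\bar\alpha_0$. So $b_n$ \emph{does} depend on $\eta$, and ``$\eta$ is independent of everything else'' does not yield independence of $c_n$ and $b_n$. Independence holds only because rotation invariance makes $(-\eta\bar\alpha_{n-2-k})_{0\le k\le n-2}$ independent of $\eta$.

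The missing idea is to work directly in the model of Proposition~\ref{prop:tcirc_matrix}, with the coupled coefficients of Assumption~\ref{assumption}. There the eigenvalues of $\Circ_{n,\beta}^{[r_n]}$ are those of $\cC(\alpha_0,\dots,\alpha_{n-2},r_n\eta)$, and $\eta$ is independent of $\alpha_0,\dots,\alpha_{n-2}$ by construction. The perturbation is then just the last Verblunsky coefficient, replacing $0$ in the $r=0$ model. Hence $\Phi_k,\Phi_k^*$ for $k\le n-1$ are those of the size-$(n-1)$ truncated ensemble. One Szeg\H o step gives $\Phi_n^{[r_n]}=z\Phi_{n-1}-r_n\bar\eta\,\Phi_{n-1}^*$ and $(\Phi_n^{[r_n]})^*=\Phi_{n-1}^*-r_n\eta z\Phi_{n-1}$ (not $\Phi_n^*-rz\Phi_n$), so
\[
\sqrt n\,B^{[r_n]}_{n,\beta}(z)=\frac{z\sqrt n\,B_{n-1}(z)-\sqrt{2/\beta}\,r\bar\eta}{1-r_n\eta zB_{n-1}(z)}.
\]
From here your Step~3 works, and more simply than you indicate. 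Since $|B_{n-1}|\le1$ on $\D$, the denominator is $1+O(|r_n|)$ deterministically, so no tightness is needed. Theorem~\ref{thm:main} applies to $\sqrt nB_{n-1}$ because $\sqrt{n/(n-1)}\to1$. Finally, the limit $\sqrt{2/\beta}\,(zf_{\textup{GAF}}(z)-r\bar\eta)$ has the law in \eqref{eq:perturb_conv} because $-\bar\eta$ is again uniform and independent.
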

	In particular, the eigenvalues of $\Circ_{n,\beta}^{[r_n]}$ converge weakly to the zeros of the random series $r+ zf_{\textup{GAF}}(z)$. This extends the result of \cite{ForIpsen} to general $\beta>0$. 
	
	\vspace{1 ex}
	
	\noindent {\bf Outline of the paper}: In Section \ref{sec:background}, we provide a brief introduction of the theory of orthogonal polynomials on the unit circle (OPUC) and the CMV construction for finitely supported probability measures on the unit circle. Section \ref{sec:proof_main} includes the proofs of Theorems \ref{thm:main} and \ref{thm:perturb} on the scaling limit of the Blaschke products of the rank-one truncated and multiplicatively perturbed circular beta ensembles in the bulk regime. Section \ref{sec:lss} discusses the asymptotic independence of the linear statistics of the truncated circular beta ensembles and the reversed characteristic polynomials.
	\vspace{1 ex}

	\noindent {\bf Acknowledgment}: YL is grateful to Benedek Valk\'o for valuable discussions. YL and JN thank the Institut Mittag-Leffler for its  hospitality, where part of this work originated during the program \textit{Random Matrices and Scaling Limits} in Fall 2024.
	\vspace{1 ex}
	
	\noindent {\bf AI use statement}: ChatGPT was used to proofread and assist with language editing of an earlier version of this paper.
	
	\section{Background}\label{sec:background}
	This section presents a brief introduction to the OPUC theory and the CMV matrix constructions for probability measures on the unit circle, as well as a review of the matrix models for the full and truncated circular beta ensembles.
	
	\subsection{OPUC theory and CMV matrix}\label{subsec:OPUC}
	We briefly review the necessary background for the OPUC theory and the CMV algorithm. We refer the reader to \cite{OPUC1} for a detailed discussion of the subject. 
	
	Let $\mu$ be a probability measure supported on $n$ distinct points $e^{i\theta_j}, 1\le j \le n$ on $\partial \D$. For $0\le k\le n-1$, the $k$-th orthogonal polynomial is defined as the unique monic polynomial of degree $k$ that is orthogonal to $1,z,\dots,z^{k-1}$ in $L^2(d\mu)$. We extend this definition to $\Phi_n(z)=\prod_{j=1}^n (z-e^{i\theta_j})$, which can be viewed as the characteristic polynomial of the measure $\mu$. Let $\Phi_k^*(z):=z^k\overline{\Phi_k(1/\bar z)}$ be the reversed polynomial of $\Phi_k$ for $0\le k\le n$, then the Szeg\H o recursion states that (see e.g.~Section 1.5, vol.~1 of \cite{OPUC1}):
	\begin{align}\label{eq:Szego1}
		\binom{\Phi_{k+1}}{\Phi_{k+1}^*}=
		% A_k Z \binom{\Phi_{k}}{\Phi_{k}^*}, \qquad \binom{\Phi_0}{\Phi_0^*}
		\mat{1}{-\bar \alpha_k}{-\alpha_k}{1} \mat{z}{0}{0}{1} \binom{\Phi_{k}}{\Phi_{k}^*}, \qquad \binom{\Phi_0}{\Phi_0^*}
		=\binom{1}{1},\qquad 0\le k\le n-1.
	\end{align}
	Here the constants $\alpha_k$, $0\le k\le n-1$ are called the Verblunsky coefficients of $\mu$. They satisfy the property that $\alpha_k\in\D$ for $0\le k\le n-2$ and $\alpha_{n-1}\in\partial \D$. In fact, the map between probability measures supported on $n$ points on $\partial \D$ and the sequence of Verblunsky coefficients $(\alpha_0, \dots, \alpha_{n-1})\in \D^{n-1}\times \partial \D$ is a homeomorphism, see e.g.~Theorem 1.7.11 of \cite{OPUC1}. Note that a  probability measure on the unit circle that is not finitely supported has an infinite sequence of Verblunsky coefficients lying within the unit disk.
	
	Using the Verblunsky coefficients, we are able to construct the so-called CMV matrix corresponding to $\mu$ as follows.
	
	\begin{definition}\label{def:CMV}
		Let $\mu_n$ be a probability measure supported on $n$ distinct points on the unit circle and let $\alpha_k,0\le k\le n-1$ be its Verblunsky coefficients obtained via the Szeg\H o recursion (\ref{eq:Szego1}). Define
		\[
		\Xi_k = \begin{pmatrix}
			\bar\alpha_k & \rho_k\\ \rho_k& -\alpha_k
		\end{pmatrix},\qquad \rho_k=\sqrt{1-|\alpha_k|^2},\qquad 0\le k\le n-2
		\]
		and set $\Xi_{-1}=(1)$ and $\Xi_{n-1}=(\bar\alpha_{n-1})$ to be $1\times 1$ matrices. Then we define the \emph{CMV matrix} corresponding to $\mu_n$ or $\{\alpha_k,0\le k\le n-1\}$ as
		\begin{align}\label{eq:CMV}
			\cC(\alpha_0,\cdots,\alpha_{n-1}) :=\cL\cM,
		\end{align}
		where $\cL $ and $ \cM$ are $n\times n$ block-diagonal matrices 
		\begin{align*}
			\cL=\operatorname{diag}\left(\Xi_0,\Xi_2\cdots,\Xi_{2\lfloor\tfrac{n-1}{2}\rfloor}\right),\qquad \cM=\operatorname{diag}\left(\Xi_{-1},\Xi_1\cdots,\Xi_{2\lfloor\tfrac{n}{2}\rfloor-1}\right).
		\end{align*}
		Note that $\cC(\alpha_0,\dots, \alpha_{n-1})$ is unitary if and only if $|\alpha_{n-1}|=1$. To deal with the rank-one truncation and multiplicative perturbation of random unitary matrices, we also allow $\alpha_{n-1}\in\D$, in which case the resulting matrix $\cC(\alpha_0,\alpha_1,\dots,\alpha_{n-1})$ becomes sub-unitary. 
	\end{definition}

	% When $\alpha_{n-1}\in\partial\D$, the unitary CMV matrix $\cC=\cC(\alpha_0,\alpha_1\dots,\alpha_{n-1})$ has the property that the spectral measure with respect to the first coordinate is given by the finitely supported probability measure $\mu$. More precisely, let $\mathbf{e}_1=(1,0,\dots,0)^\top\in\mathbb{R}^n$ and let $\mathbf{v}_k,1\le k\le n$ be the eigenvectors of $\cC$, then we have 
	% \begin{equation}\label{eq:spectral_measure}
		% \mu=\sum_{k=1}^n \delta_{e^{i\lambda_k}} \cdot \left|\langle \mathbf{e}_1,  \mathbf{v}_k\rangle\right|^2.
		% \end{equation}
	% Here $e^{i\lambda_k},1\le k\le n$ represents both the support of $\mu$ and the eigenvalues of $\cC$. Note that the above definition of spectral measure works for all unitary matrices $M$ for which $\mathbf{e}_1$ is cyclic. 
	In the following proposition, we summarize the connection between CMV matrices, probability measures on $\partial \D$, and the orthogonal polynomials associated with these probability measures.

	\begin{proposition}[\cite{KK,OPUC1}]\label{prop:CMV1}
		
		Suppose that ${\bf U}$ is an $n\times n$ unitary matrix for which $\mathbf{e}_1$ is cyclic. Let $\mu$ be its spectral measure with respect to $\mathbf{e}_1$, and denote its Verblunsky coefficients by $\alpha_k,\,  0\le k\le n-1$. Then there is a unitary matrix ${\bf V}$ with $V\mathbf{e}_1=\mathbf{e}_1$ so that ${\bf U}={\bf V} \cC(\alpha_0,\dots, \alpha_{n-1}) {\bf V}^{-1}$. 
		% In particular, $U$ and $\cC$ are unitarily equivalent, and they have the same spectral measure with respect to $\mathbf{e}_1$.
		Moreover, let $\Phi_k,0\le k\le n$ be the monic orthogonal polynomials of $\mu$, then we have
		\begin{equation}\label{eq:CMV_characteristic}
			\det(zI_k-\mathcal{C}(\alpha_0,\dots,\alpha_{k-1})) = \Phi_k(z),\quad 1\le k \le n.
		\end{equation}
	\end{proposition}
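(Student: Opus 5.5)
We would prove the two assertions separately. Both rest on the standard fact that the CMV matrix is the matrix of multiplication by $z$ in a suitable orthonormal basis of $L^2(d\mu)$.

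\emph{Unitary equivalence.} Let ${\bf U}$ be unitary with $\mathbf{e}_1$ cyclic, and let $\mu$ be its spectral measure. Since $\mathbf{e}_1$ is cyclic, $\mu$ is supported on exactly $n$ points. The map ${\bf U}^j\mathbf{e}_1\mapsto z^j$, $j\in\mathbb{Z}$, preserves inner products, because both sides equal $\int z^{j-l}\,d\mu$. It therefore extends to a unitary ${\bf W}:\C^n\to L^2(d\mu)$ with ${\bf W}\mathbf{e}_1=1$ and ${\bf W}{\bf U}{\bf W}^{-1}=M_z$, multiplication by $z$.

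Next we apply Gram--Schmidt in $L^2(d\mu)$ to the ordered family $1,z,z^{-1},z^2,z^{-2},\dots$ (the first $n$ of them). By the theorem of Cantero--Moral--Vel\'azquez (see Section 4.2 of \cite{OPUC1}), the matrix of $M_z$ in the resulting orthonormal basis is exactly $\cL\cM=\cC(\alpha_0,\dots,\alpha_{n-1})$. Here the $\alpha_k$ are the Verblunsky coefficients of $\mu$ from \eqref{eq:Szego1}, and the first basis vector is $1$. The verification is that the basis vectors are expressed through $\varphi_k$ and $\varphi_k^*$, and the Szeg\H o recursion then yields the factorization into the $2\times 2$ blocks $\Xi_k$; this is the one genuinely computational step, and we would cite it rather than redo it. Let ${\bf W}'$ be the unitary sending $\mathbf{e}_j$ to the $j$-th CMV basis vector, so ${\bf W}'\mathbf{e}_1=1$. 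Then ${\bf V}:={\bf W}^{-1}{\bf W}'$ fixes $\mathbf{e}_1$ and satisfies ${\bf U}={\bf V}\cC{\bf V}^{-1}$.

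\emph{Characteristic polynomial.} Fix $1\le k\le n$. For $k=n$, $\cC(\alpha_0,\dots,\alpha_{n-1})$ is unitary with spectral measure $\mu$. Its eigenvalues are the $n$ atoms of $\mu$, so its characteristic polynomial is $\prod_j(z-e^{i\theta_j})=\Phi_n$ by definition.

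For $k<n$ we have $\alpha_{k-1}\in\D$, and we use an interpolation in the last coefficient. Regard $\gamma\mapsto D(\gamma):=\det(zI_k-\cC(\alpha_0,\dots,\alpha_{k-2},\gamma))$. The variable $\gamma$ enters only through the $1\times1$ block $\Xi_{k-1}=(\bar\gamma)$, which occupies a single diagonal position of $\cL$ or $\cM$. Multilinearity of the determinant in that row therefore makes $D$ an affine function of $\bar\gamma$. Likewise, by \eqref{eq:Szego1}, $P(\gamma):=z\Phi_{k-1}(z)-\bar\gamma\,\Phi_{k-1}^*(z)$ is affine in $\bar\gamma$, and $P(\alpha_{k-1})=\Phi_k$.

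For $|\gamma|=1$, the matrix $\cC(\alpha_0,\dots,\alpha_{k-2},\gamma)$ is unitary with cyclic vector $\mathbf{e}_1$. By the homeomorphism between measures on $k$ points and $\D^{k-1}\times\partial\D$, together with the previous paragraph applied in dimension $k$, its spectral measure has Verblunsky coefficients $(\alpha_0,\dots,\alpha_{k-2},\gamma)$. Its first $k-1$ orthogonal polynomials are thus $\Phi_0,\dots,\Phi_{k-1}$, and by the case just treated its characteristic polynomial is $z\Phi_{k-1}-\bar\gamma\Phi_{k-1}^*$. Hence $D(\gamma)=P(\gamma)$ for all $\gamma\in\partial\D$.

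Two affine functions of $\bar\gamma$ that agree at two (indeed infinitely many) points agree identically. Taking $\gamma=\alpha_{k-1}$ gives \eqref{eq:CMV_characteristic}.

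The main obstacle is the CMV basis computation identifying the matrix of $M_z$ with $\cL\cM$. We would take it from \cite{OPUC1} rather than reprove it; everything else is linear algebra.
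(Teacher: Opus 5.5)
Your proof is correct. The paper does not prove this proposition; it only cites \cite{KK,OPUC1}, so the comparison is with the standard argument in those references.

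Your first half is exactly that standard argument: the isometry ${\bf U}^j\mathbf{e}_1\mapsto z^j$ onto $L^2(d\mu)$, followed by the Cantero--Moral--Vel\'azquez theorem for the orthonormalization of $1,z,z^{-1},z^2,\dots$.

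For the determinant identity, the usual route is different. One observes that $\cC(\alpha_0,\dots,\alpha_{k-1})$, with last block $(\bar\alpha_{k-1})$, is the upper-left $k\times k$ corner of the CMV matrix of $\mu$. This holds because the factor not containing $\Xi_{k-1}$ has a block boundary between positions $k$ and $k+1$, so the corner of $\cL\cM$ is the product of the corners. Hence it is the compression of $M_z$ to the span of the first $k$ CMV basis vectors, which is $z^{-\lfloor (k-1)/2\rfloor}$ times the polynomials of degree less than $k$. Multiplication by a power of $z$ is unitary and commutes with $M_z$. So this compression is equivalent to $p\mapsto \pi_{k-1}(zp)$ on polynomials of degree less than $k$, where $\pi_{k-1}$ is the orthogonal projection. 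Since $\pi_{k-1}(z^k)=z^k-\Phi_k$, this operator is the companion matrix of $\Phi_k$.

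That argument works directly with $\mu$. It also explains why the sub-unitary CMV matrices used later in the paper are corners of unitary ones. Your interpolation in the last coefficient avoids this corner/compression bookkeeping. In exchange, it uses the unitary case in every dimension and the surjectivity half of Verblunsky's theorem for $k$-point measures, to realize $(\alpha_0,\dots,\alpha_{k-2},\gamma)$ with $|\gamma|=1$ as Verblunsky coefficients. Both are available, so the trade is reasonable.

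Two harmless slips:
\begin{itemize}
\item When $k$ is even, the block $(\bar\gamma)$ sits in $\cM$, so $\bar\gamma$ multiplies the last \emph{column} of $\cL\cM$, not a row. The affine dependence then comes from multilinearity in that column, or equivalently from the fact that it is a rank-one perturbation.
\item $\Phi_0,\dots,\Phi_{k-1}$ are the first $k$ orthogonal polynomials, not the first $k-1$.
\end{itemize}
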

	
	To describe the matrix models for rank-one truncations and multiplicative perturbations of unitary ensembles, we introduce a simple transformation on the sequence of Verblunsky coefficients and the corresponding CMV matrix below. 
	
	\begin{definition}\label{def:rev_Verblunsky}
		Suppose $\mu$ is a finitely supported probability measure on $\partial\D$. Let $(\alpha_0, \dots, \alpha_{n-1})\in \D^{n-1} \times \partial \D$ and $\cC$ be the corresponding  Verblunsky coefficients and the CMV matrix with respect to $\mu$. We define the reversed version of the Verblunsky coefficients as 
		\begin{align}\label{eq:rev_Verblunsky}
			(\rev \alpha_0,\rev  \alpha_1,\dots, \rev \alpha_{n-1}):= (-\alpha_{n-1} \bar \alpha_{n-2}, -\alpha_{n-1} \bar \alpha_{n-3}, \dots, -\alpha_{n-1}\bar \alpha_0, \alpha_{n-1}). 
		\end{align}
		Then we denote by $\rev  \mu$ and $\rev  \cC$ the probability measure and CMV matrix corresponding to the sequence  $\rev \alpha_k,0\le k\le n-1$. We call $\rev  \mu$ and $\rev \cC$ the reversed version of $\mu$ and $\cC$, respectively. 
	\end{definition}
	
	In Killip-Nenciu \cite{KillipNenciu}, it was shown in Proposition B.2 that $\cC$ and $\rev  \cC$ have the same eigenvalues. In \cite{KK} Killip and Kozhan observed that the truncated and multiplicative perturbation of a CMV matrix is unitarily equivalent to a sub-unitary CMV matrix model. 
	
	\begin{proposition}[\cite{KK}]\label{prop:CMV3}
		Consider the same setup as in Proposition \ref{prop:CMV1} so that the unitary matrices ${\bf U}$ and $\cC(\alpha_0,\dots,\alpha_{n-1})$ are unitarily equivalent, and $n\ge 2$. Then there exists a unitary matrix ${\bf V}$ such that
		\begin{align}\label{eq:CMV_trunc}
			\trunc{{\bf U}} = \begin{cases}
				{\bf V}\cC(\rev \alpha_0, \dots, \rev \alpha_{n-2}){\bf V}^{-1} &\mbox{if $n$ is even,}\\
				{\bf V}\cC(\rev \alpha_0, \dots, \rev \alpha_{n-2})^\top {\bf V}^{-1}&\mbox{if $n$ is odd.}
			\end{cases}
		\end{align}
		For $r\in [-1,1]$, there exists a unitary matrix ${\bf V}$ such that 
		\begin{align}\label{eq:CMV_pert}
			{\bf U}^{[r]}=\begin{cases}
				{\bf V}\cC(\rev \alpha_0, \dots, \rev \alpha_{n-2},r\rev \alpha_{n-1}){\bf V}^{-1} &\mbox{if $n$ is even,}\\
				{\bf V}\cC(\rev \alpha_0, \dots, \rev \alpha_{n-2},r\rev \alpha_{n-1})^\top {\bf V}^{-1}&\mbox{if $n$ is odd.}
			\end{cases}
		\end{align}
	\end{proposition}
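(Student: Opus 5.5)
The plan is to reduce both statements to explicit identities for the CMV matrix $\cC=\cC(\alpha_0,\dots,\alpha_{n-1})=\cL\cM$, and then identify the resulting sub-unitary matrices with reversed CMV matrices. This is done by reversing the order of the basis and conjugating by a diagonal unitary.

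\emph{Step 1: reduction to $\cC$.} By Proposition \ref{prop:CMV1}, ${\bf U}={\bf W}\cC{\bf W}^{-1}$ with ${\bf W}\mathbf e_1=\mathbf e_1$. Since ${\bf W}$ is unitary and fixes $\mathbf e_1$, it also fixes $\mathbf e_1^\perp$, so ${\bf W}=\diag(1,{\bf W}')$. Hence $\trunc{{\bf U}}={\bf W}'\trunc{\cC}{\bf W}'^{-1}$. Also ${\bf W}$ commutes with $\diag(r,1,\dots,1)$, so ${\bf U}^{[r]}={\bf W}\cC^{[r]}{\bf W}^{-1}$. It therefore suffices to treat $\trunc{\cC}$ and $\cC^{[r]}$.

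\emph{Step 2: factorization.} Since $\cM=\diag(1,\Xi_1,\Xi_3,\dots)$, the first row of $\cM$ is $\mathbf e_1^\top$. So for $i,j\ge2$ we get $(\cL\cM)_{ij}=\sum_{k\ge2}\cL_{ik}\cM_{kj}$, which gives $\trunc{\cC}=\trunc{\cL}\,\trunc{\cM}$. Reading off the blocks:
\[
\trunc{\cL}=\diag(-\alpha_0,\Xi_2,\Xi_4,\dots),\qquad \trunc{\cM}=\diag(\Xi_1,\Xi_3,\dots),
\]
because the lower-right entry of $\Xi_0$ is $-\alpha_0$. Similarly, $\cC^{[r]}=\cL\,\diag(r,\Xi_1,\Xi_3,\dots)$, i.e.\ $\cM$ with $\Xi_{-1}$ replaced by $(r)$.

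\emph{Step 3: reversal.} Let $J$ be the antidiagonal permutation on $\C^{n-1}$ (respectively $\C^n$ for the perturbation). Conjugation by $J$ reverses the block order and maps each block
\[
\mat{\bar\alpha_k}{\rho_k}{\rho_k}{-\alpha_k}\quad\text{to}\quad\mat{-\alpha_k}{\rho_k}{\rho_k}{\bar\alpha_k}.
\]
For the $(n-1)$-dimensional truncation, the $2\times2$ blocks $\Xi_{n-2},\Xi_{n-3},\dots,\Xi_1$ now occupy the positions of the reversed blocks $\tilde\Xi_0,\tilde\Xi_1,\dots,\tilde\Xi_{n-3}$. The $1\times1$ block $(-\alpha_0)$ becomes the final $1\times1$ block, which should equal $(\overline{\rev\alpha_{n-2}})=(-\bar\alpha_{n-1}\alpha_0)$.

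To produce $\tilde\Xi_j$, with entries $\overline{\rev\alpha_j}=-\bar\alpha_{n-1}\alpha_{n-2-j}$ and $-\rev\alpha_j=\alpha_{n-1}\bar\alpha_{n-2-j}$, and with the same $\rho$, I conjugate by a diagonal unitary $D=\diag(d_1,\dots,d_{n-1})$. Its phases are alternating powers of a square root of $\alpha_{n-1}$ (concretely $d_m=\omega^{\pm m}$ or similar with $\omega^2=\bar\alpha_{n-1}$, fixed by induction along the chain). In the product of the two block-diagonal factors, only ratios $d_i/d_j$ of adjacent indices matter, so the phases can be chosen consistently. Possibly one also needs to multiply one factor by a scalar phase and the other by its inverse, which does not change the product.

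The remaining issue is which factor plays the role of $\cL$ and which of $\cM$ after reversal. The factor whose blocks start at index $1$ must be $\tilde\cL$ (blocks $\tilde\Xi_0,\tilde\Xi_2,\dots$). After reversal, $\trunc{\cM}$ starts at a $2\times2$ block exactly when $n-1$ is even, and otherwise $\trunc{\cL}$ does. In the latter case I get $D J\trunc{\cC}JD^{-1}=\tilde\cM\tilde\cL$, and I would write this as $(\tilde\cL^\top\tilde\cM^\top)^\top=\cC(\rev\alpha)^\top$, using that the blocks are symmetric. This explains the parity split in \eqref{eq:CMV_trunc}.

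\emph{Step 4: perturbation.} The perturbation is handled identically in dimension $n$. The $2\times2$ block $\Xi_0$ now survives, and the $1\times1$ block $(r)$ of $\cM$ becomes the last $1\times1$ block. That last block must equal $(\overline{r\rev\alpha_{n-1}})=(r\bar\alpha_{n-1})$, which the same phase conjugation produces. The last coefficient $(\bar\alpha_{n-1})$ of the original $\cC$ plays the role of $\tilde\Xi_{-1}=(1)$ after the phases absorb $\alpha_{n-1}$.

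I expect the main obstacle to be the bookkeeping of phases. One must check that a single diagonal $D$ simultaneously transforms every block of both factors into the reversed $\tilde\Xi_j$, including the boundary $1\times1$ blocks. I would verify this first on $n=2,3,4$ to fix the exponent pattern of $d_m$, and then prove it by induction along the chain.
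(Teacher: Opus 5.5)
Your overall route works, and it is a genuinely different one from the paper's: the paper gives no proof of this proposition and just cites Killip--Kozhan, so a direct computation would be a self-contained alternative. However, the phase step, which you rightly flag as the crux, fails as you formulate it.

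\textbf{The gap.} Conjugating a factor by a diagonal unitary, $A\mapsto DAD^{-1}$, leaves every diagonal entry of that factor unchanged. A compensating scalar $c$ then multiplies all of them by the same number. But inside a single block the required multipliers differ. The block $J\Xi_kJ$ has diagonal $(-\alpha_k,\bar\alpha_k)$, while its target $\tilde\Xi_{n-2-k}$ has diagonal $(-\bar\alpha_{n-1}\alpha_k,\,\alpha_{n-1}\bar\alpha_k)$. You would need $c=\bar\alpha_{n-1}$ and $c=\alpha_{n-1}$ simultaneously, which fails unless $\alpha_{n-1}^2=1$ (for $\alpha_k\neq 0$).

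The boundary blocks show the same obstruction. For $n=3$ you would need $c\,D\,\diag(\bar\alpha_2,-\alpha_0)\,D^{-1}=\tilde\cM=\diag(1,-\bar\alpha_2\alpha_0)$, which forces both $c=\alpha_2$ and $c=\bar\alpha_2$. So the check you plan cannot succeed: a single diagonal $D$, plus possibly a scalar, will not turn every block of both factors into the corresponding $\tilde\Xi_j$. The guess $d_m=\omega^m$ is also not the right pattern.

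\textbf{The fix.} Rescale the two factors by \emph{different} diagonal unitaries. With $X=J\trunc{\cL}J$ and $Y=J\trunc{\cM}J$ (respectively $J\cL J$ and $J\,\diag(r,\Xi_1,\Xi_3,\dots)\,J$ for the perturbation), use $X\mapsto DXE^{-1}$ and $Y\mapsto EYD^{-1}$. The product is still $D(XY)D^{-1}$. Solving the entrywise conditions gives $D=\diag(1,\mu,1,\mu,\dots)$ and $E=\mu D^{-1}$, with $\mu=\alpha_{n-1}$ for $n$ even and $\mu=\bar\alpha_{n-1}$ for $n$ odd. Then $ED^{-1}=\diag(\mu,\bar\mu,\mu,\dots)$ is not a scalar.

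This same choice also handles the boundary $1\times 1$ blocks, namely $(\bar\alpha_{n-1})\mapsto(1)$, $(-\alpha_0)\mapsto(-\bar\alpha_{n-1}\alpha_0)$ and $(r)\mapsto(r\bar\alpha_{n-1})$, using only $|\alpha_{n-1}|=1$. Equivalently, you can compare the pentadiagonal products entrywise. Their diagonals already agree, and the single $D$ above, whose ratios $d_{i+1}/d_i$ alternate between $\mu$ and $\bar\mu$, matches all other entries. Your remark about adjacent ratios actually supports this product-level version, not the factor-by-factor one.

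With this repair, the rest of your argument goes through: the reduction via ${\bf W}=\diag(1,{\bf W}')$, the factorization $\trunc{\cC}=\trunc{\cL}\,\trunc{\cM}$, the reversal by $J$, and the transpose via symmetry of the blocks.

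\textbf{A small slip.} The transpose occurs when $J\trunc{\cM}J$ is the factor beginning with a $2\times 2$ block, i.e.\ when $n-1$ is even. That is your ``former'' case, not the ``latter''.
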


	\subsection{Matrix models for the full and truncated circular beta ensembles}
	In this subsection, we explicitly describe the matrix models for the circular beta ensembles constructed by Killip and Nenciu  \cite{KillipNenciu}, as well as their rank-one truncated and multiplicatively perturbed version introduced by Killip and Kozhan \cite{KK}.

	Recall the definition of the size $n$ circular beta ensemble with density function \eqref{eq:cbe}. We define the size $n$ Killip-Nenciu measure as a random probability measure on $\partial \D$ as
	\begin{equation}\label{eq:KillipNenciu}
		\mu^{\textup{KN}}_{n,\beta} =\sum_{j=1}^n \pi_j\delta_{e^{i\theta_j}},
	\end{equation}
	where the support is distributed as the size $n$ circular beta ensemble, and the weights $\pi_j, 1\le j\le n$ are
	chosen according to Dirichlet$(\beta/2,\dots,\beta/2)$ distribution, independently of the support.

	The joint distribution of the Verblunsky coefficients with respect to $\mu_{n,\beta}^{\textup{KN}}$ was computed in \cite{KillipNenciu}.
	
	%  \begin{definition}\label{def:Theta}
		% 	For  $a>0$ we denote by $\Theta(a+1)$ the distribution on $\D$ that has probability density function
		% 	\begin{align*}
			% 	\tfrac{a}{2\pi}(1-|z|^2)^{a/2-1}.
			% 	\end{align*}
		% 	We extend the definition for $a=0$ as follows: $\Theta(1)$ is the uniform distribution on $\partial \D$. 
		% \end{definition}
	
	\begin{proposition}[\cite{KillipNenciu}]\label{prop:circ_Verblunsky}
		For fixed $n$ and $\beta>0$, the sequence of Verblunsky coefficients $\alpha_k,0\le k\le n-1$ of $\mu_{n,\beta}^{\textup{KN}}$ are independent, rotationally invariant such that $|\alpha_k|^2\sim \textup{Beta}(1,\frac{\beta}{2}(n-k-1))$ for $0\le k\le n-2$ and $\alpha_{n-1}\sim \textup{Unif}(\partial\D)$. 
	\end{proposition}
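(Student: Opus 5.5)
The plan is to compute the law of $(\alpha_0,\dots,\alpha_{n-1})$ by a change of variables from the spectral data $(\theta_1,\dots,\theta_n;\pi_1,\dots,\pi_{n-1})$, with $\pi_n=1-\sum_{j<n}\pi_j$. The joint density of the spectral data under $\mu^{\textup{KN}}_{n,\beta}$ is, up to a constant,
\begin{align*}
\prod_{j<k}|e^{i\theta_j}-e^{i\theta_k}|^{\beta}\,\prod_{j=1}^n \pi_j^{\beta/2-1}.
\end{align*}
The map from spectral data to Verblunsky coefficients is a bijection between $n!$-to-one labellings of the support (times weights) and $\D^{n-1}\times\partial\D$, by the homeomorphism recalled in Section~\ref{subsec:OPUC}. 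It is smooth with smooth inverse away from a null set, so only two ingredients are needed. The first is an algebraic identity expressing the Vandermonde in terms of the $\alpha_k$ and $\pi_j$. The second is the Jacobian of the map.

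\textbf{Step 1: the Vandermonde identity.} I will prove
\begin{align*}
\prod_{j<k}|e^{i\theta_j}-e^{i\theta_k}|^2=\prod_{j=1}^n\pi_j^{-1}\prod_{k=0}^{n-2}(1-|\alpha_k|^2)^{n-k-1}.
\end{align*}
To do this, write the Gram matrix of $1,z,\dots,z^{n-1}$ in $L^2(d\mu)$ in two ways. On one hand it equals $W^*\diag(\pi)W$ with $W$ the Vandermonde matrix, so its determinant is $\prod_j\pi_j\,|\det W|^2$. On the other hand, Gram--Schmidt with the monic $\Phi_k$ gives determinant $\prod_{k=0}^{n-1}\|\Phi_k\|^2$. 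From the Szeg\H o recursion \eqref{eq:Szego1} one has $\|\Phi_k\|^2=\prod_{m<k}\rho_m^2$. Multiplying these norms produces exactly the exponent $n-k-1$ on $\rho_k^2$.

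\textbf{Step 2: the Jacobian.} The target is
\begin{align*}
d\theta_1\cdots d\theta_n\,d\pi_1\cdots d\pi_{n-1}= c_n\,\frac{\prod_{j}\pi_j}{\prod_{k=0}^{n-2}(1-|\alpha_k|^2)}\;\prod_{k=0}^{n-2}d^2\alpha_k\,d\phi,\qquad \alpha_{n-1}=e^{i\phi}.
\end{align*}
I would pass through the intermediate coordinates given by the moments $c_m=\int z^{-m}d\mu$, for $1\le m\le n-1$, together with the unimodular parameter $\alpha_{n-1}$.
\begin{itemize}
\item In the direction $\alpha\mapsto c$, the recursion shows that $c_{k+1}$ depends affinely on $\bar\alpha_k$ (or $\alpha_k$) with coefficient $\pm\prod_{m<k}\rho_m^2$, plus terms in $\alpha_0,\dots,\alpha_{k-1}$. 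The map is therefore triangular, and its real Jacobian is $\prod_k\prod_{m<k}\rho_m^4=\prod_k\rho_k^{4(n-k-2)}$, with the last coordinate handled separately.
\item In the direction $(\theta,\pi)\mapsto c$, the Jacobian is a confluent-Vandermonde-type determinant. It equals a constant times $\prod_j\pi_j\,|\det W|^4$, which can be computed by viewing $c_m=\sum_j\pi_j e^{-im\theta_j}$ and differentiating.
\end{itemize}
Dividing the two Jacobians and using Step 1 to replace $|\det W|^4$ yields the target formula. The bookkeeping of the last coordinate $\alpha_{n-1}$, where $c_n$ is determined by the support condition, needs care.

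\textbf{Step 3: conclusion.} Substituting Step 1 (raised to the power $\beta/2$) and Step 2 into the spectral density gives
\begin{align*}
\prod_j\pi_j^{-\beta/2}\prod_k\rho_k^{\beta(n-k-1)}\cdot\prod_j\pi_j^{\beta/2-1}\cdot\frac{\prod_j\pi_j}{\prod_k\rho_k^{2}}=\prod_{k=0}^{n-2}(1-|\alpha_k|^2)^{\frac{\beta}{2}(n-k-1)-1}
\end{align*}
with respect to $\prod_k d^2\alpha_k\,d\phi$. This density factorizes, so the $\alpha_k$ are independent. It is rotation invariant in each $\alpha_k$, and $\alpha_{n-1}$ is uniform on $\partial\D$. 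In polar coordinates, $|\alpha_k|^2$ has density proportional to $(1-x)^{\frac\beta2(n-k-1)-1}$ on $[0,1]$, which is $\textup{Beta}(1,\tfrac\beta2(n-k-1))$.

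I expect the main obstacle to be Step 2: identifying the $(\theta,\pi)\to$ moments Jacobian exactly and handling the boundary coefficient $\alpha_{n-1}$. If the direct determinant evaluation is messy, I would instead prove the Jacobian formula by induction on $n$, using the Geronimus/Szeg\H o relation between $\mu$ and the measure with coefficients $(\alpha_1,\dots,\alpha_{n-1})$.
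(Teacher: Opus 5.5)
The paper gives no proof of this statement. It quotes Killip--Nenciu, whose argument has the same architecture as yours: the Gram/Toeplitz-determinant identity of your Step~1, a Jacobian formula for the spectral map, and the factorization of your Step~3. Steps~1 and~3 are correct, and so is your target Jacobian, with constant $2^{n-1}$.

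\textbf{The gap is in Step~2, at the point you postponed, and it is not just bookkeeping.} The tuple $(\theta,\pi)$ has $2n-1$ real coordinates, while $(c_1,\dots,c_{n-1})$ has only $2n-2$. So neither of your two Jacobians is defined until a last coordinate is chosen on both sides, and that choice carries a decisive factor. Suppose you divide the two determinants you actually state, $C\prod_j\pi_j|\det W|^4$ and $\prod_k\rho_k^{4(n-k-2)}$, so that the last coordinate contributes only a constant. Using Step~1 you then get
\[
d\theta\,d\pi=C'\,\frac{\prod_j\pi_j}{\prod_{k=0}^{n-2}(1-|\alpha_k|^2)^2}\,\prod_{k=0}^{n-2}d^2\alpha_k\,d\phi .
\]
This is off from your target by the factor $\|\Phi_{n-1}\|^2=\prod_{k\le n-2}(1-|\alpha_k|^2)$. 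Fed into Step~3, it would lower every Beta parameter by one, and the $k=n-2$ factor would not even be integrable for $\beta\le 2$.

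\textbf{The missing input is your triangular relation at $k=n-1$.} It still holds because $\Phi_n$ vanishes on the support of $\mu$, so $\Phi_n\perp 1$ in $L^2(\mu)$. It reads $c_n=\alpha_{n-1}\|\Phi_{n-1}\|^2+g(\alpha_0,\dots,\alpha_{n-2})$. Hence for fixed $c_1,\dots,c_{n-1}$ the moment $c_n$ runs over a circle of radius $R=\|\Phi_{n-1}\|^2$, and $\phi=\arg\alpha_{n-1}=\arg(c_n-g)$. Take $\phi$ as the last coordinate on both sides.

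On the $\alpha$ side, $\phi\mapsto\phi$, so your $\prod_{k\le n-2}\|\Phi_k\|^4$ is right.

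On the spectral side, drop the normalization and complexify. With free weights $p_j$ and $w_j=e^{-i\theta_j}$, consider the holomorphic map $(p_j,w_j)_{j\le n}\mapsto(m_{-(n-1)},\dots,m_n)$, where $m_k=\sum_j p_jw_j^k$. Its Jacobian determinant is $\pm\prod_j p_j w_j^{-2(n-1)}\prod_{j<l}(w_l-w_j)^4$, a confluent Vandermonde with every node doubled. Its modulus is $\prod_jp_j|\det W|^4$ when $|w_j|=1$.

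Now express the real coordinates $(m_0,\operatorname{Re}c_k,\operatorname{Im}c_k,\phi)$ holomorphically in the $m_k$, replacing $\bar c_k$ by $m_{-k}$ inside $g$ and $R$. This costs a factor $\tfrac12$ for each pair $(m_k,m_{-k})$ and a factor $|\partial\phi/\partial m_n|=|m_n-g|^{-1}=1/R$. Fixing $m_0=1$ restores $\sum_j\pi_j=1$. So the spectral-side Jacobian is $2^{1-n}\prod_j\pi_j|\det W|^4/R$, not $C\prod_j\pi_j|\det W|^4$.

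Divide by $\prod_{k\le n-2}\|\Phi_k\|^4$ and use Step~1 in the form $\prod_j\pi_j|\det W|^2=\prod_{k=0}^{n-1}\|\Phi_k\|^2$. Everything cancels except $\|\Phi_{n-1}\|^4/R=\|\Phi_{n-1}\|^2$, giving
\[
\left|\frac{\partial(\alpha_0,\dots,\alpha_{n-2},\phi)}{\partial(\theta_1,\dots,\theta_n,\pi_1,\dots,\pi_{n-1})}\right|=2^{1-n}\,\frac{\prod_{k=0}^{n-2}(1-|\alpha_k|^2)}{\prod_{j=1}^n\pi_j},
\]
which is your target. Equivalently, your formula $C\prod_j\pi_j|\det W|^4$ is correct if the last coordinate is arc length on that circle, but then the $\alpha$-side Jacobian acquires the extra factor $R$. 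With this in place, Step~3 goes through exactly as you wrote it.
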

	Recall the reversed version of a sequence of Verblunsky coefficients \eqref{eq:rev_Verblunsky}. By the independence and rotational invariance of $\alpha_k,0\le k\le n-1$, we have $\wtl \alpha_{k},0\le k\le n-1$ are independent, rotationally invariant such that $|\wtl\alpha_k|^2\sim \textup{Beta}(1,\frac{\beta}{2}(k+1))$ for $0\le k\le n-2$ and $\wtl\alpha_{n-1}\sim\textup{Unif}(\partial\D)$. Note that the spectral weights of the $\wtl\alpha_k,0\le k\le n-1$ are no longer Dirichlet-distributed. The advantage of the reversed ordering is that now the Verblunsky coefficients can be coupled in the same probability space. See Assumption \ref{assumption} below for more discussions.
	Together with Proposition \ref{prop:CMV3}, this yields the following result of Killip and Kozhan \cite{KK}.

	\begin{proposition}[\cite{KK}]\label{prop:tcirc_matrix}
		For fixed $n\ge 1$ and $\beta>0$, let $\alpha_k,0\le k\le n-1$ be independent, rotationally invariant random variables such that $|\alpha_k|^2\sim \textup{Beta}(1,\frac{\beta}{2}(k+1))$. Then the joint eigenvalue distribution of the sub-unitary CMV matrix  $\cC(\alpha_0,\alpha_1,\dots,\alpha_{n-1})$ 
		is the same as that of $\trunc{\Circ_{n+1,\beta}}$, which is given by the size $n$ truncated circular beta ensemble \eqref{eq:tcbe}.
		
		Let $\eta\sim \textup{Unif}(\partial\D)$ be independent of $\alpha_k,k\ge 0$. For fixed $r\in[-1,1]$, the sub-unitary CMV matrix $\cC(\alpha_0,\alpha_1,\dots,\alpha_{n-2},r\eta )$ has the same joint eigenvalue distribution as the multiplicatively perturbed model matrix $\Circ_{n,\beta}^{[r]}$. 
	\end{proposition}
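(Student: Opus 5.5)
The plan is to realize $\trunc{\Circ_{n+1,\beta}}$ and $\Circ^{[r]}_{n,\beta}$ directly through Proposition \ref{prop:CMV3}, and then identify the law of the reversed Verblunsky coefficients. Throughout, I take the Killip--Nenciu model $\Circ_{m,\beta}$ to be, by definition, the CMV matrix $\cC(\alpha_0,\dots,\alpha_{m-1})$ built from the Verblunsky coefficients of $\mu^{\textup{KN}}_{m,\beta}$. Almost surely the $m$ support points are distinct and all Dirichlet weights are positive, so $\mathbf{e}_1$ is cyclic. Hence Propositions \ref{prop:CMV1} and \ref{prop:CMV3} apply with ${\bf U}=\Circ_{m,\beta}$.

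\emph{Truncation.} Apply Proposition \ref{prop:CMV3} with $m=n+1\ge 2$. It shows that $\trunc{\Circ_{n+1,\beta}}$ is unitarily conjugate to $\cC(\rev\alpha_0,\dots,\rev\alpha_{n-1})$ or to its transpose. Neither conjugation nor transposition changes the spectrum. By \eqref{eq:rev_Verblunsky}, the coefficients are $\rev\alpha_k=-\alpha_n\bar\alpha_{n-1-k}$ for $0\le k\le n-1$.

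By Proposition \ref{prop:circ_Verblunsky} with $n+1$ in place of $n$, the variables $\alpha_0,\dots,\alpha_n$ are independent, with $\alpha_n\sim\textup{Unif}(\partial\D)$ and $|\alpha_j|^2\sim\textup{Beta}(1,\tfrac{\beta}{2}(n-j))$. I then condition on $\alpha_n$. Conditionally, the variables $-\alpha_n\bar\alpha_{n-1-k}$ are independent, since they are functions of distinct independent $\alpha_j$'s. Each is rotationally invariant, because complex conjugation and multiplication by a fixed unimodular constant preserve rotational invariance. Their moduli satisfy $|\rev\alpha_k|^2=|\alpha_{n-1-k}|^2\sim\textup{Beta}(1,\tfrac{\beta}{2}(k+1))$. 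This conditional joint law does not depend on the value of $\alpha_n$, so it is also the unconditional joint law. Thus $(\rev\alpha_0,\dots,\rev\alpha_{n-1})$ has exactly the law stated in the proposition. Consequently the spectrum of $\cC(\alpha_0,\dots,\alpha_{n-1})$ with the stated coefficients is equal in law to that of $\trunc{\Circ_{n+1,\beta}}$. The identification of this law with the density \eqref{eq:tcbe} is the Killip--Kozhan computation quoted in the introduction, which I take as given.

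\emph{Multiplicative perturbation.} Apply \eqref{eq:CMV_pert} with $m=n$ and ${\bf U}=\Circ_{n,\beta}$. This gives that $\Circ^{[r]}_{n,\beta}$ is unitarily conjugate to $\cC(\rev\alpha_0,\dots,\rev\alpha_{n-2},r\rev\alpha_{n-1})$ or to its transpose. Here $\rev\alpha_{n-1}=\alpha_{n-1}\sim\textup{Unif}(\partial\D)$ and $\rev\alpha_k=-\alpha_{n-1}\bar\alpha_{n-2-k}$. The same conditioning argument, now conditioning on $\alpha_{n-1}$, shows two things. First, $\rev\alpha_0,\dots,\rev\alpha_{n-2}$ are independent and rotationally invariant, with $|\rev\alpha_k|^2\sim\textup{Beta}(1,\tfrac{\beta}{2}(k+1))$. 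Second, this vector is independent of $\eta:=\alpha_{n-1}$, because its conditional law given $\alpha_{n-1}$ does not depend on $\alpha_{n-1}$. Hence $(\rev\alpha_0,\dots,\rev\alpha_{n-2},\eta)$ has the same law as $(\alpha_0,\dots,\alpha_{n-2},\eta)$ in the statement, and the eigenvalue laws agree.

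The main point requiring care is this independence step. The reversal multiplies every coefficient by the same random phase $\alpha_{n-1}$ (respectively $\alpha_n$), so independence is not immediate. The conditioning argument, which relies on rotational invariance, resolves it. Everything else is bookkeeping of the Beta parameters under $k\mapsto n-1-k$.
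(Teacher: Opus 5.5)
Your proposal is correct and follows essentially the same route as the paper: apply Proposition \ref{prop:CMV3}, note that the reversed coefficients $\rev\alpha_k$ are independent and rotationally invariant with $|\rev\alpha_k|^2\sim\textup{Beta}(1,\tfrac{\beta}{2}(k+1))$ and independent of the unimodular last coefficient, and take the density \eqref{eq:tcbe} from Killip--Kozhan. The paper asserts the independence in one line from independence and rotational invariance; your argument of conditioning on the common phase spells that step out.
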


	\section{Limits of the Blaschke products}\label{sec:proof_main}
	
	This section establishes the scaling limit of the Blaschke product associated with the truncated and multiplicatively perturbed circular beta ensembles. We first show the weak convergence of the Taylor coefficients of the Blaschke products in Section \ref{subsec:weak_conv_coeff}, and then upgrade this to uniform convergence on compacts in law by proving the necessary tightness result in Section \ref{subsec:tight}. These results yield proofs of Theorem \ref{thm:main} and Corollary \ref{cor:bulk}. Using a similar argument, we complete the proof of Theorem \ref{thm:perturb} in Section \ref{subsec:perturb}.

	\subsection{Weak convergence of the Taylor coefficients}\label{subsec:weak_conv_coeff}
	
	Recall the Blaschke product defined in Theorem \ref{thm:main}. By Propositions \ref{prop:CMV1} and \ref{prop:CMV3}, we have 
	\[
	B_n(z) = \frac{\Phi_n(z)}{\Phi_n^*(z)}.
	\]
	Since the zeros of $\Phi_n^*(z)$ lie outside of the unit disk, the Blaschke product $B_n(z)$ is well defined for $z\in\D$ and $\{z\in\D:B_n(z)=0\}=\{z\in\D:\Phi_n(z)=0\}$. Expanding $B_n(z)$ in a Taylor series around $z=0$, we write  
	\[
	B_n(z) = \sum_{k\ge 0} b_k^{(n)}z^k,\qquad z\in\D.
	\]
	The goal of this section is to prove the following statement. 
	\begin{proposition}\label{prop:weak_conv_coeff}
		Under the setup as in Theorem \ref{thm:main}, let $b_k^{(n)}$ be the $k$-th Taylor coefficient of $B_n(z)$. Then for any fixed $\ell \ge 0$, we have
		\begin{align*}
			\sqrt{n}(b_0^{(n)},b_1^{(n)},\dots, b_\ell^{(n)}) \Rightarrow \sqrt{\frac{2}{\beta}}(\xi_0,\xi_1,\dots,\xi_\ell)\quad \text{in law as $n\to\infty$},
		\end{align*}
		where $\xi_i, 0\le i\le \ell$ are i.i.d.~standard complex Gaussian variables. 
	\end{proposition}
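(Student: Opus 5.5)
The plan is to reduce everything to the Szeg\H{o} recursion \eqref{eq:Szego1} written for the ratio $B_k:=\Phi_k/\Phi_k^*$. By Proposition \ref{prop:tcirc_matrix} I may take $B_n=\Phi_n/\Phi_n^*$, where $\Phi_k$ are built from independent rotationally invariant $\alpha_k$ with $|\alpha_k|^2\sim\textup{Beta}(1,\frac{\beta}{2}(k+1))$. Dividing the two lines of \eqref{eq:Szego1} gives the Möbius recursion
\[
B_{k+1}(z)=\frac{zB_k(z)-\bar\alpha_k}{1-\alpha_k zB_k(z)}=-\bar\alpha_k+\frac{(1-|\alpha_k|^2)\,zB_k(z)}{1-\alpha_k zB_k(z)},\qquad B_0\equiv 1 .
\]
Each $B_k$ is a Schur function, so $|B_k|\le 1$ on $\D$. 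The key structural observation is that each step contributes a constant term $-\bar\alpha_k$ plus $zB_k$ times a function analytic in $zB_k$.

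\textbf{Step 1 (algebraic reduction).} Iterate the recursion $\ell+1$ times, starting from $B_{m}$ with $m=n-\ell-1$. Each iteration multiplies the dependence on $B_m$ by at least one extra factor $z$. Hence $B_m$ enters $B_n$ only through terms divisible by $z^{\ell+1}$. Consequently, for $0\le j\le \ell$, the coefficient $b_j^{(n)}$ is a fixed polynomial $P_j$, independent of $n$, in the finitely many variables $\alpha_{n-1},\dots,\alpha_{n-\ell-1}$ and their conjugates. Next I expand the geometric series $1/(1-\alpha_kzB_k)$ and the factors $1-|\alpha_k|^2$. The only terms of $P_j$ of total degree one come from the constant term $-\bar\alpha_{n-1-j}$ transported by $j$ applications of multiplication by $z$. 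Thus
\[
b_j^{(n)}=-\bar\alpha_{n-1-j}+R_j^{(n)},
\]
where $R_j^{(n)}$ is a polynomial with bounded (in $n$) coefficients and all monomials of degree $\ge 2$ in $\{\alpha_{n-i},\bar\alpha_{n-i}\}_{1\le i\le \ell+1}$. Boundedness holds because the number of monomials and their combinatorial coefficients depend only on $\ell$. As a sanity check, $j=0$ gives exactly $b_0^{(n)}=-\bar\alpha_{n-1}$.

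\textbf{Step 2 (size of the errors).} Since $|\alpha_k|^2\sim\textup{Beta}(1,\frac\beta2(k+1))$, we have $\ev|\alpha_k|^2=\frac{1}{1+\frac\beta2(k+1)}=O(1/k)$. So for $k\ge n-\ell-1$, each $\alpha_k$ is $O_P(n^{-1/2})$. Every monomial in $R_j^{(n)}$ is then $O_P(n^{-1})$, and $\sqrt n R_j^{(n)}\to0$ in probability.

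\textbf{Step 3 (Gaussian limit of the leading terms).} The variables $\alpha_{n-1},\dots,\alpha_{n-1-\ell}$ are independent and rotationally invariant. It therefore suffices to find the law of $n|\alpha_{n-1-j}|^2$. A $\textup{Beta}(1,b)$ variable multiplied by $b$ converges to $\textup{Exp}(1)$ as $b\to\infty$, and here $b=\frac\beta2(n-j)$. Hence $n|\alpha_{n-1-j}|^2\Rightarrow\frac2\beta\,\textup{Exp}(1)$. Combined with an independent uniform phase, this gives $-\sqrt n\,\bar\alpha_{n-1-j}\Rightarrow\sqrt{2/\beta}\,\xi_j$, because a standard complex Gaussian has $|\xi|^2\sim\textup{Exp}(1)$ and uniform independent argument. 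Independence across $j$ yields joint convergence to i.i.d.\ limits. Slutsky's lemma, together with Step 2, then concludes the proof of Proposition \ref{prop:weak_conv_coeff}.

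The only step needing care is Step 1: one must verify that no degree-one term other than $-\bar\alpha_{n-1-j}$ survives, and that $B_m$ cannot leak into low-order coefficients. Both follow from the structure $B_{k+1}=-\bar\alpha_k+zB_k\cdot(\text{analytic})$, which I would prove by a short induction on the number of iterations.
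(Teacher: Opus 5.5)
Your argument is correct, and it takes a genuinely different route from the paper.

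The paper unrolls the Szeg\H{o} recursion for $\Phi_n$ into the global identity \eqref{eqn::expansion}, $\sqrt n B_n=-\sum_j \sqrt n\,\bar\alpha_{n-1-j}\,(\Phi^*_{n-1-j}/\Phi^*_n)\,z^j$, and extracts $b_k^{(n)}$ by a contour integral. It then invokes the almost sure uniform convergence of $\Phi_n^*$ to a non-vanishing limit (Proposition \ref{prop:martingale_conv}, imported from Chhaibi--Najnudel). This shows that the factor multiplying $\sqrt n\,\bar\alpha_{n-1-j}$ in $\sqrt n\, b_k^{(n)}$ tends to $\mathbbm{1}_{\{j=k\}}$.

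You instead work with the M\"obius recursion for $B_k$ and exploit locality: the first $\ell+1$ coefficients of $B_n$ are fixed polynomials in $\alpha_{n-1},\dots,\alpha_{n-1-\ell}$ whose linear part is $-\bar\alpha_{n-1-j}$. Your induction is sound. Every coefficient has zero constant term, and both the $p\ge 2$ terms of the geometric series and the $-|\alpha_k|^2$ correction only raise the degree. In fact the remainder has degree at least three, e.g.\ $b_1^{(n)}=-(1-|\alpha_{n-1}|^2)\bar\alpha_{n-2}$. This is the same mechanism the paper uses only later, in Proposition \ref{prop:Bn_aux}, where $B_n-B_n^{(m)}$ vanishes to order $n-m$ at $0$.

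What your route buys:
\begin{itemize}
\item It is more elementary and self-contained, needing nothing beyond $\ev|\alpha_k|^2=O(1/k)$ and the Beta-to-exponential limit.
\item It yields an explicit rate for the error term.
\item It does not depend on the convergence of $\log\Phi_n^*$, whose extension to all $\beta>0$ the paper only sketches.
\end{itemize}
The paper's route, in exchange, writes all coefficients at once through $\Phi^*$-ratios. That is the form used in Remark \ref{rmk:explicit_coupling} and in the joint limit of $(\Phi_n^*,\sqrt n B_n)$ in Section \ref{sec:lss}.

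The two arguments are closer than they look. Since $\Phi^*_{k+1}/\Phi^*_k=1-\alpha_k zB_k$, the ratio $\Phi^*_{n-1-j}/\Phi^*_n$ is a product of $j+1$ factors $(1-\alpha_i zB_i)^{-1}$. It tends to $1$ uniformly on compacts simply because $\alpha_i\to 0$ in probability. So the paper's step does not actually need the full martingale limit.
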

	
	The proof of Proposition~\ref{prop:weak_conv_coeff} relies on two main ingredients: a martingale structure of the logarithm of the reversed orthogonal polynomials $\Phi_n^*$, and a Gaussian approximation for the Verblunsky coefficients. Throughout the paper, we work with the following probability space.
	\begin{assumption}\label{assumption}
		Let $\{\alpha_j,j\ge 0\}$  be a sequence of independent, rotationally invariant random variables such that 
		\[
		|\alpha_j|^2\sim \textup{Beta}(1,\beta_j),\qquad \beta_j=\tfrac{\beta}{2}(j+1)\qquad \text{for $j\ge 0$,}
		\]
		and set $\alpha_{-1}\equiv -1$.
		Let $\eta \sim \textup{Unif}\,(\partial \D)$ be independent of $\{\alpha_j,j\ge 0\}$.
		In this probability space, we introduce the filtration
		\begin{align*}
			\mathcal{F}_n = \sigma(\alpha_0,\cdots,\alpha_{n-1}),\qquad n\ge 0,
		\end{align*}
		and define the CMV matrices
		\begin{align*}
			\cC_n = \cC(\alpha_0,\alpha_1,\cdots,\alpha_{n-1}),\qquad \cC_n^{[r]} = \cC(\alpha_0,\alpha_1,\cdots,\alpha_{n-2},r\eta).
		\end{align*}
	\end{assumption}
	By Proposition \ref{prop:tcirc_matrix}, the eigenvalues of $\cC_n$ and $\cC_n^{[r]},r\in[-1,1]$ are distributed as the size $n$ truncated and multiplicatively perturbed circular beta ensemble, respectively. As already observed in \cite{CMN,CN_GMC}, the probability space defined in Assumption \ref{assumption} provides a natural coupling of full and truncated circular beta ensembles of all dimensions.
	
	Define $\log \Phi_k^*$ as the unique holomorphic logarithm on $\D$ such that $\log\Phi_k^*(0)=0$.
	By the Szeg\H o recursion \eqref{eq:Szego1}, for $k\ge 0, z\in\D$ we have 
	\begin{align*}\label{eq:log*_recursion}
		\log \Phi_{k+1}^*(z) = \log \Phi_k^*(z) + \log \left(1-\alpha_k z \frac{\Phi_k(z)}{\Phi_k^*(z)}\right),
	\end{align*}
	where we take the principal branch of the logarithm in the second term of the equality.
	By definition, $\Phi_k$ and $\Phi_k^*$ are $\mathcal{F}_k$-measurable, and $\alpha_k$ is independent of $\mathcal{F}_k$. Using the rotational invariance of $\alpha_k$, the integrability of $\log(1-|\alpha_k|)$ and the fact that $|zB_k(z)|<1$ for $z\in\D$, one can check that $\{\log \Phi_k^*(z), z\in\D\}$ is a $\mathcal{F}_k$-martingale. In \cite{JM15}, Jiang and Matsumoto computed the moments of traces of circular beta ensembles and established Central Limit Theorems for linear statistics of polynomials. (See also \cite{Li_CLT} for a direct proof of the CLTs by the first author.) By controlling the error between $\log \Phi_n^*$ and the logarithm of the reversed characteristic polynomial of the full C$\beta$E, and by establishing suitable integrability properties of $\log \Phi_k^*$, Chhaibi and Najnudel \cite{CN_GMC} characterized the limit $\log \Phi_\infty^*$ as follows. 
	
	\begin{proposition}[Propositions 3.1 and 3.6 of \cite{CN_GMC}]\label{prop:martingale_conv}
		Let $\Phi_k^*, k\ge 0$ be defined as above. Then we have that $\log \Phi_k^*$ converges to $\log \Phi_\infty^*$ a.s.~as $k\to\infty$, uniformly on compacts in $\D$. Moreover, the limit has the same distribution as 
		\begin{equation*}
			\log \Phi_\infty^*(z) \ed \sqrt{\frac{2}{\beta}}G^{\mathbb{C}}(z):=\sqrt{\frac{2}{\beta}}\sum_{k=1}^{\infty} \frac{z^k}{\sqrt{k}}\xi_k,
		\end{equation*}
		where $\xi_k$ are i.i.d.~standard complex Gaussian variables.
	\end{proposition}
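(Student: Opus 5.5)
The plan is to treat $\log\Phi_k^*$ coefficient by coefficient, as a family of scalar martingales, and to show they are bounded in $L^2$. This gives almost sure convergence. The law of the limit is then identified by comparing with the reversed characteristic polynomial of a full circular beta ensemble and invoking the trace CLT of Jiang--Matsumoto \cite{JM15}.

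\emph{Step 1 (increments and a priori bound on $B_k$).} Write $B_k=\Phi_k/\Phi_k^*=\sum_{j\ge0} b^{(k)}_j z^j$, so that
\[
\log\Phi^*_{k+1}(z)-\log\Phi_k^*(z)=-\sum_{m\ge1}\frac{(\alpha_k zB_k(z))^m}{m}.
\]
By rotational invariance of $\alpha_k$ and its independence from $\mathcal F_k$, each Taylor coefficient of this increment has conditional mean zero. The $z^{p}$-coefficient of the $m=1$ term is $-\alpha_k b^{(k)}_{p-1}$. Since $\ev|\alpha_k|^2=1/(1+\beta_k)\asymp 1/k$, summability of the variances requires an estimate $\ev|b^{(k)}_j|^2\le C_j/k$.

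This is the step I expect to be the main obstacle. The naive pointwise bound $|zB_k|\le|z|$ gives conditional variances of order $\sum_k 1/k=\infty$, so one genuinely needs $B_k$ to be small. I would derive the bound from the Szeg\H o recursion in the form
\[
B_{k+1}=\frac{zB_k-\bar\alpha_k}{1-\alpha_k zB_k},
\]
expanded in powers of $\alpha_k$. For $b_0^{(k)}=-\bar\alpha_{k-1}$ the estimate is immediate. For $j\ge1$, the coefficient $b^{(k+1)}_j$ equals $b^{(k)}_{j-1}$ plus terms carrying at least one factor $\alpha_k$ or $\bar\alpha_k$, together with products of lower-order coefficients. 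This structure is where an induction on $j$ has to come from, but the bare recursion $b^{(k+1)}_j\approx b^{(k)}_{j-1}$ only shifts the index; it does not by itself produce the $1/k$ decay. The decay has to come from the fact that after the shift every surviving contribution is anchored at a recent $\alpha_i$ with $i\ge k-j$, each satisfying $\ev|\alpha_i|^{2q}\lesssim_q i^{-q}$, together with control of the products of lower coefficients. I would first try an induction on $j$ whose hypothesis bounds second and higher moments of all $b^{(k)}_{j'}$ with $j'<j$; the higher moments are needed to handle those products. I have not checked that this closes. The higher powers $m\ge2$ of $\alpha_k zB_k$ are easier, since they contribute $O(k^{-m})$.

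\emph{Step 2 (a.s.\ convergence).} Given Step 1, each coefficient martingale has summable conditional variances and hence converges a.s.\ and in $L^2$. The bound also holds uniformly in $k$ with a constant growing at most geometrically in the index $p$, so Cauchy estimates on discs $|z|\le r<1$ upgrade coefficientwise convergence to a.s.\ uniform convergence on compacts of $\log\Phi_k^*$ to a limit $\log\Phi^*_\infty$.

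\emph{Step 3 (identification of the law).} Let $\eta$ be as in Assumption \ref{assumption}. By Proposition \ref{prop:tcirc_matrix} with $r=1$, the matrix $\cC(\alpha_0,\dots,\alpha_{n-1},\eta)$ has the law of $\Circ_{n+1,\beta}$, and by the Szeg\H o recursion its reversed characteristic polynomial is
\[
\Phi^*_n(z)\bigl(1-\eta zB_n(z)\bigr)=\det(I-z\,\mathbf U^{*})
\]
for some $\mathbf U$ distributed as C$\beta$E of size $n+1$. The coefficients of $\log\det(I-z\mathbf U^*)=-\sum_m \operatorname{tr}(\mathbf U^{-m})z^m/m$ converge jointly in law to $\sqrt{2/(\beta m)}\,\xi_m$ by \cite{JM15}. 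Step 1 gives $B_n\to0$ in probability uniformly on compacts, so $\log(1-\eta zB_n)\to0$. Combining this with Step 2, the coefficients of $\log\Phi_\infty^*$ have the law of those of $\sqrt{2/\beta}\,G^{\mathbb C}$, which yields the stated distributional identity.
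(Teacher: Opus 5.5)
\textbf{Verdict: the plan matches the paper's, but its key estimate (Step 1) is left unproved, and the fix is a one-line bound already used in the paper.}

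Your architecture is the one the paper imports from \cite{CN_GMC}. You show that $\log\Phi_k^*$ is an $\mathcal F_k$-martingale with increments $\log(1-\alpha_k zB_k)$, prove $L^2$ bounds, and identify the law via $\Phi_n^*(1-\eta zB_n)=\det(I-z\mathbf U^*)$ and the trace CLT of \cite{JM15}. Your Step 3 is correct once $B_n\to 0$ is known.

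The gap is Step 1. You rightly flag it as the crux but leave it unproved, and both Step 2 and the input $B_n\to0$ of Step 3 depend on it. Your coefficientwise induction does close for each fixed $j$:
\begin{itemize}
\item the cross term between $\rho_k^2 b^{(k)}_{j-1}$ and the remainder vanishes by rotational invariance of $\alpha_k$;
\item every product in the remainder carries a factor $\alpha_k$ and at least two coefficients of $B_k$ of index at most $j-2$, each of modulus at most $1$;
\item hence $\ev|b^{(k+1)}_j|^2\le \ev|b^{(k)}_{j-1}|^2+O_j(k^{-2})$.
\end{itemize}
But Step 2 needs more than fixed-$j$ bounds. Write $c^{(k)}_p$ for the $p$-th Taylor coefficient of $\log\Phi_k^*$. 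To reach every compact of $\D$ you need $\sup_k\ev|c^{(k)}_p|^2\le C_{r'}r'^{-2p}$ for every $r'<1$, i.e.\ subexponential growth in $p$. That is awkward to extract from the induction.

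The missing idea is to estimate $B_k$ at the level of functions, exactly as in the proof of Proposition~\ref{prop:tight}.
\begin{itemize}
\item Since $|B_{k+1}(z)|$ is the pseudo-hyperbolic distance between $zB_k(z)$ and $\bar\alpha_k$, you get $|B_{k+1}(z)|\le |z||B_k(z)|+|\alpha_k|$.
\item Iterating and applying Minkowski's inequality with $\ev|\alpha_i|^2=(1+\beta_i)^{-1}$ gives $\ev\|B_k\|_r^2\le C_\beta(1-r)^{-2}k^{-1}$.
\item On $|z|\le r$ one has $|\alpha_k zB_k(z)|\le r$, so $|\log(1-\alpha_kzB_k(z))|\le |\alpha_k|\,\|B_k\|_r/(1-r)$.
\item Independence of $\alpha_k$ from $\mathcal F_k$ then yields
\[
\ev\big\|\log\Phi_{k+1}^*-\log\Phi_k^*\big\|_r^2\le \frac{C_\beta}{(1-r)^4k^2},\qquad k\ge 1 .
\]
\end{itemize}
The square roots of these bounds are not summable, so you must use orthogonality of the martingale increments. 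This gives $\ev|\log\Phi_n^*(z)-\log\Phi_m^*(z)|^2=O(1/m)$ uniformly on $|z|\le r'$. Apply Doob's $L^2$ inequality at each point of the circle $|z|=r'$, and combine it with the bound $\|f\|_r\le \frac{r'}{r'-r}\big(\frac{1}{2\pi}\int_0^{2\pi}|f(r'e^{i\theta})|^2\,d\theta\big)^{1/2}$ for $r<r'$. This gives $\ev\sup_{n\ge m}\|\log\Phi_n^*-\log\Phi_m^*\|_r^2=O(1/m)$, hence a.s.\ uniform convergence on compacts.

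Cauchy estimates also give $\ev|b^{(k)}_j|^2\le C_\beta r^{-2j}(1-r)^{-2}k^{-1}$ for every $r<1$. This is the coefficient bound you were after, with the right dependence on $j$. The same estimate gives $\|B_n\|_r\to 0$ in $L^2$, which is exactly what your Step 3 uses.
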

	Note that Proposition 3.1 of \cite{CN_GMC} is stated for $\beta > 2$ due to its connection with Gaussian Multiplicative Chaos on $\partial\D$. Applying the arguments used in the proof of Proposition 3.1 of \cite{CN_GMC} with Proposition 3.6 of \cite{CN_GMC} and the CLTs for circular beta ensembles established in \cite{JM15} yields the general convergence result for $\beta>0$.
	
	We now turn to the Gaussian approximation of the Verblunsky coefficients. The next result follows directly from the rotational invariance and a CLT type approximation for a Beta distributed random variable.
	\begin{fact}\label{fact:Gaussianity}
		Let $\alpha$ be a rotationally invariant random variable such that $|\alpha|^2\sim\textup{Beta}(1,a)$, then
		\begin{align*}
			\sqrt{a}\alpha\Rightarrow \xi
		\end{align*}
		in law as $a\to\infty$, where $\xi$ is a standard complex Gaussian variable.
	\end{fact}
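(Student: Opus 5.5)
Write $\alpha = R e^{i\phi}$, where $\phi$ is uniform on $[0,2\pi)$ and independent of $R$. This holds because $\alpha$ is rotationally invariant and, since $|\alpha|^2$ has a Beta law, $\alpha\neq 0$ almost surely. Set $U := R^2 \sim \textup{Beta}(1,a)$. The plan is to show that $aU$ converges in law to an $\textup{Exp}(1)$ random variable $E$, and then to use the polar representation of the standard complex Gaussian. Indeed, if $\xi$ is standard complex Gaussian, then $|\xi|^2 \sim \textup{Exp}(1)$, the argument of $\xi$ is uniform, and the modulus and argument are independent. So $\xi \ed \sqrt{E}\,e^{i\phi}$ with $E$ independent of $\phi$.

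\textbf{Step 1.} The density of $\textup{Beta}(1,a)$ is $a(1-u)^{a-1}$ on $[0,1]$. Hence for fixed $t\ge 0$ and $a>t$,
\[
\pr(aU>t) = \left(1-\tfrac{t}{a}\right)^{a} \longrightarrow e^{-t}
\qquad\text{as } a\to\infty .
\]
This gives $aU\Rightarrow E\sim\textup{Exp}(1)$.

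\textbf{Step 2.} By the continuous mapping theorem, $\sqrt{a}\,R=\sqrt{aU}\Rightarrow\sqrt{E}$. Since $\phi$ is independent of $R$ and its law does not depend on $a$, the pair $(\sqrt{a}R,\phi)$ converges jointly in law to $(\sqrt{E},\phi)$ with independent components. One can check this directly through characteristic functions, since they factor under independence.

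\textbf{Step 3.} The map $(r,\theta)\mapsto re^{i\theta}$ is continuous. Applying the continuous mapping theorem once more gives
\[
\sqrt{a}\,\alpha=\sqrt{a}R\,e^{i\phi}\Rightarrow\sqrt{E}\,e^{i\phi}\ed\xi .
\]

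There is no genuine obstacle in this argument. The only points needing care are the joint convergence in Step 2, which is immediate from independence, and checking that $\sqrt{E}\,e^{i\phi}$ is indeed standard: $\ev[\xi]=\ev[\xi^2]=0$ because the phase is uniform, and $\ev[|\xi|^2]=\ev[E]=1$.
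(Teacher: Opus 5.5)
Your proof is correct and takes essentially the paper's approach. The paper justifies the fact in one line, from rotational invariance together with the approximation of a rescaled $\textup{Beta}(1,a)$ variable by the $\textup{Exp}(1)$ law of $|\xi|^2$ (compare the coupling $|\alpha|^2=1-e^{-|\xi|^2/a}$ in Remark~\ref{rmk:explicit_coupling}); your polar decomposition and the tail computation $(1-t/a)^a\to e^{-t}$ fill in exactly those details.
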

	Indeed, using the fact that $|\xi|^2$ has a standard exponential distribution, one can construct an explicit coupling between the Verblunsky coefficients defined in Assumption \ref{assumption} and an i.i.d.~sequence of standard complex Gaussian variables, see Remark \ref{rmk:explicit_coupling} below. However, this coupling does not substantially improve the convergence of the Blaschke product. For our purpose, the weak convergence stated in Fact \ref{fact:Gaussianity} is enough.
	
	We are now ready to prove Proposition \ref{prop:weak_conv_coeff}.
	
	\begin{proof}[Proof of Proposition \ref{prop:weak_conv_coeff}]
		By applying Szeg\H o recursion \eqref{eq:Szego1} repeatedly, we have 
		\begin{align*}
			\Phi_{n}(z) &= z\Phi_{n-1}(z) - \bar \alpha_{n-1} \Phi_{n-1}^*(z)\\
			&=z^2\Phi_{n-2}(z) - z\bar\alpha_{n-2}\Phi_{n-2}^*(z) -\bar \alpha_{n-1}\Phi_{n-1}^*(z)\\
			&=...\\
			&=z^n - z^{n-1}\bar\alpha_0\Phi_0^*(z) - z^{n-2}\bar \alpha_1\Phi_1^*(z) -\dots - z\bar \alpha_{n-2}\Phi_{n-2}^*(z) - \bar\alpha_{n-1}\Phi_{n-1}^*(z)
			\\ & =z^n  -\sum_{j=0}^{n-1} \bar \alpha_{n-1-j}\Phi_{n-1-j}^*(z)  z^{j}
		\end{align*}
		where we used $\alpha_{-1}=-1$ by convention and set $\Phi_{-1}=\Phi_{-1}^*=1$ for the ease of notation. Therefore, we have
		\begin{equation}\label{eqn::expansion}
			\sqrt nB_n(z)
			% =z^{n}\sqrt n\frac{1}{\Phi_{n}^*} - z^{n-1}\sqrt n\bar\alpha_0\frac{\Phi_0^*}{\Phi_{n}^*} - z^{n-2}\sqrt n\bar\alpha_1\frac{\Phi_1^*}{\Phi_{n}^*} -\dots - z\sqrt n\bar\alpha_{n-2}\frac{\Phi_{n-2}^*}{\Phi_{n}^*} - \sqrt n\bar\alpha_{n-1}\frac{\Phi_{n-1}^*}{\Phi_{n}^*}.
			=-\sum_{j=0}^n \sqrt{n}\bar\alpha_{n-1-j}\frac{\Phi_{n-1-j}^*(z)}{\Phi_n^*(z)}z^j.
		\end{equation}
		
		Recall that $b_k^{(n)}$ denotes the Taylor coefficient of $B_n$ of degree $k$ at zero. Fix $\ell\ge 0$, for any $0<r<1$ and $k\le \ell<n$, by the residue theorem we have 
		\begin{align}\label{eq:Taylor_expan}
			\sqrt{n}b_k^{(n)}=\frac{1}{2\pi i}\int_{|z|=r}\sqrt nB_n(z)\frac{dz}{z^{k+1}}=-\frac{1}{2\pi i}\sum_{j=0}^{n}\sqrt n\bar\alpha_{n-1-j}\int_{|z|=r}\frac{\Phi_{n-1-j}^*(z)}{\Phi_{n}^*(z)}\frac{dz}{z^{k+1-j}}.
		\end{align}
		Note that for $j> k$ the integrand on the right hand side of \eqref{eq:Taylor_expan} is holomorphic, hence the corresponding integral is zero.
		Proposition \ref{prop:martingale_conv} implies that $\Phi^*_{n}$ converges to $\Phi_\infty^*$ uniformly on compacts in $\D$: moreover, 
		$\Phi_\infty^*$ does not vanish in $\D$. Using this convergence, the fact that the zeros of $\Phi_k^*$, $k \geq 0$ lie outside the unit disk, and Cauchy's integral formula we obtain that
		\begin{equation}\label{eqn::coe_aux}
			\lim_{n\to\infty}\frac{1}{2\pi i}\int_{|z|=r}\frac{\Phi_{n-1-j}^*(z)}{\Phi_{n}^*(z)}\frac{dz}{z^{k+1-j}}=\mathbbm{1}_{\{j=k\}}.
		\end{equation}
		Recall the distribution of $\alpha_k$ as in Assumption \ref{assumption}, by Fact \ref{fact:Gaussianity} we have
		\begin{equation}\label{eqn::coe_Gaussian}
			(\sqrt n\bar \alpha_{n-1},\ldots,\sqrt n\bar \alpha_{n-\ell-1}) \Rightarrow \sqrt{\frac{2}{\beta}}(\xi_0,\xi_1,\dots,\xi_\ell)
		\end{equation}
		jointly in distribution as $n\to\infty$, where $\xi_i,0\le i\le \ell$ are i.i.d.~standard complex Gaussian variables. Applying Slutsky's theorem with \eqref{eqn::coe_aux} and \eqref{eqn::coe_Gaussian} completes the proof of the statement.
	\end{proof}
	
	We conclude this section with a brief discussion of an explicit coupling between the Verblunsky coefficients and the i.i.d.~complex Gaussian variables.
	\begin{remark}\label{rmk:explicit_coupling}
		Let $\xi_k, k\ge0$ be a sequence of i.i.d.~standard complex Gaussian variables. Define  
		\[
		\alpha_k:= \frac{\xi_k}{|\xi_k|}\sqrt{1-\exp(-\beta_k^{-1}|\xi_k|^2)},\qquad \beta_k:=\frac{\beta}{2}(k+1),\quad k\ge0,
		\]
		then one can check that $\alpha_k,k\ge 0$ is a sequence of independent, rotationally invariant random variables such that $|\alpha_k|^2\sim\textup{Beta}(1,\beta_k)$. For any $n\ge 1$, let $B_n(z)$ be the Blaschke product of the truncated C$\beta$E defined via $\alpha_0,\cdots,\alpha_{n-1}$ under this coupling. Define
		\begin{align*}
			G_n (z) =  -\sqrt{\tfrac{2}{\beta}} \left(\sum_{j=0}^{n-1}\ol{\xi}_{n-1-j}z^j + \sum_{j=n}^\infty \ol{\xi}_jz^j\right), 
		\end{align*}
		then $G_n\ed \sqrt{\frac{2}{\beta}}f_{\textup{GAF}}(z)$. By the triangle inequality, one can bound the error $|\sqrt{n}B_n-G_n|$ by controlling the following terms
		\begin{align*}
			\Big|\sqrt{n}B_n+\sqrt{n} \sum_{j=0}^{n-1}\ol{\alpha}_{n-1-j}z^j\Big|,\qquad \Big|\sum_{j=0}^{n-1}(\sqrt{n}\ol{\alpha}_{n-1-j}-\sqrt{\tfrac{2}{\beta}}\ol{\xi}_{n-1-j})z^j\Big|,\qquad \Big|\sum_{j\ge n}\bar \xi_j z^j\Big|.
		\end{align*}
		These terms can be estimated using the bounds on $|\frac{\Phi_{n-1-j}^*}{\Phi_n^*}-1|$ and $|\sqrt{n}\alpha_{n-j-1}-\sqrt{\tfrac{2}{\beta}}\xi_{n-1-j}|$, and the fact that $M=\sup_{k\ge 0} \frac{|\xi_k|}{\sqrt{\log(2+k)}}$ is a.s.~finite. Combining these estimates yields an upper bound on $\|\sqrt{n}B_n(z)-G_n(z)\|_r$ with an explicit convergence rate. This would lead to $|\sqrt{n}B_n(z)-G_n(z)|\to 0$ a.s.~uniformly on compacts in $\D$ as $n\to\infty$. However, since the sequence of GAFs $G_n(z)$ varies with $n$, this estimate alone would not establish convergence in probability of $\sqrt{n}B_n$ to a fixed GAF. Moreover, the argument relies on the explicit coupling between the Verblunsky coefficients and the i.i.d.~standard complex Gaussian variables and could not easily be extended to a more general framework.
	\end{remark}

	\subsection{Tightness results}\label{subsec:tight}
	
	Proposition \ref{prop:weak_conv_coeff} establishes the weak convergence of the Taylor coefficients in the power series. In general, this result alone does not imply the convergence of zeros. For instance, the sequence $f_n(z)= 2^{n^2}z^n$ only converges at zero  (see e.g.~\cite{Krishnapur,HKPV} for more discussions). To complete the proof of Theorem \ref{thm:main}, we need to show weak convergence of $\sqrt{n}B_n(\cdot)$ with respect to the topology of uniform convergence on compacts in $\D$.

	We first introduce some notation. Denote by $\mathcal{H}=\cH(\D)$ the set of holomorphic functions on the unit disk. Define $d_\mathcal{H}$ to be the following metric on $\mathcal{H}$: for every $f,g\in\mathcal{H}$, we define
	\begin{equation}\label{eq:metric}
		d_\mathcal{H}(f,g)=\sum_{m\ge 1} 2^{-m}\min\left\{1,\|f-g\|_{r_m}\right\},\qquad \|f\|_r:=\sup_{z:|z|\le r}|f(z)|,\qquad r_m= 1-2^{-m}.
	\end{equation}
	Note that $(\mathcal{H},d_\mathcal{H})$ is a complete and separable metric space. The following statement gives the required tightness.
	
	\begin{proposition}\label{prop:tight}
		Consider the same setup as in Theorem \ref{thm:main}, the Blaschke product $\sqrt{n}B_n(\cdot), n\ge 1$ is tight as a sequence of random analytic functions with respect to the metric $d_{\mathcal{H}}$.
	\end{proposition}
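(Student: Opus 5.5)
Tightness in $(\mathcal H,d_{\mathcal H})$ reduces, via Montel's theorem, to showing that for every fixed $r<1$ the family $\{\|\sqrt n B_n\|_r\}_{n\ge 1}$ is tight as real random variables. The reason is standard: the sets $\{f:\|f\|_{r_m}\le C_m \ \forall m\}$ are compact in $\mathcal H$, and a union bound over $m$ with $\sum_m \varepsilon 2^{-m}$ gives the compact set carrying mass at least $1-\varepsilon$. We may also work on a slightly larger circle: if we control $\sqrt n B_n$ in $L^2$ on $|z|=r'$ for some $r<r'<1$, then Cauchy's formula bounds the sup on $|z|\le r$ by a constant times the $L^2(|z|=r')$ norm. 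So it suffices to show
\begin{align*}
\sup_n \pr\Big(\int_0^{2\pi} n|B_n(r'e^{i\theta})|^2\,d\theta>M\Big)\to 0 \qquad (M\to\infty).
\end{align*}

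For this we use the expansion \eqref{eqn::expansion}, in the form
\begin{align*}
\sqrt nB_n(z)=-\frac{1}{\Phi_n^*(z)}\sum_{j=0}^{n}\sqrt n\,\bar\alpha_{n-1-j}\Phi^*_{n-1-j}(z)z^j,
\end{align*}
and handle the denominator and the numerator separately.

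\emph{Denominator.} By Proposition \ref{prop:martingale_conv}, $\log\Phi_n^*\to\log\Phi^*_\infty$ almost surely, uniformly on $|z|\le r'$. Hence $\sup_n\|1/\Phi_n^*\|_{r'}$ is almost surely finite, and in particular tight.

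\emph{Numerator.} Call it $S_n(z)$. Its terms have a martingale-difference structure in reverse: conditionally on $\mathcal F_{n-1-j}$, the variable $\alpha_{n-1-j}$ is independent and rotationally invariant. Hence cross terms vanish in expectation, since $\ev[\bar\alpha_a\alpha_b\,|\,\mathcal F_{\max(a,b)}]=0$ for $a\ne b$. This gives
\begin{align*}
\ev|S_n(z)|^2=\sum_{j=0}^{n} n\,\ev|\alpha_{n-1-j}|^2\,\ev|\Phi^*_{n-1-j}(z)|^2\,|z|^{2j}.
\end{align*}
Here $\ev|\alpha_{m}|^2=1/(1+\beta_m)\asymp 1/(m+1)$, and the $j=n$ term equals $n|z|^{2n}$. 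The required input is a polynomial bound $\ev|\Phi_m^*(z)|^2\le C(r')(m+1)^{\gamma}$ uniform on $|z|=r'$. This should follow from the recursion $|\Phi^*_{k+1}|\le(1+|\alpha_k||z|)|\Phi_k^*|$, which gives $\ev|\Phi^*_{k+1}|^2\le\ev|\Phi_k^*|^2\,(1+2r'\ev|\alpha_k|+\ev|\alpha_k|^2)$, where the cross term has mean zero by rotational invariance, and even the crude $\ev|\alpha_k|\le Ck^{-1/2}$ suffices. Better: the martingale property gives the exact identity $\ev|\Phi^*_{k+1}|^2=\ev|\Phi^*_k|^2\,(1+|z|^2\ev|\alpha_k|^2\,\ev[\,|B_k|^2\,|\ldots])$, whose factor is at most $1+C/k$, yielding $\ev|\Phi_m^*|^2\le C m^{\gamma}$ with $\gamma=2/\beta$ or similar. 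Then
\begin{align*}
\ev|S_n(z)|^2\le C\sum_{j}\frac{n}{n-j}(n-j)^{\gamma}r'^{2j}.
\end{align*}
For $j\le n/2$ the terms are bounded by $C n^{\gamma}r'^{2j}$, which is not uniformly bounded, so this fails.

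\emph{Fix.} Apply the numerator estimate to the ratio directly: write $\Phi^*_{n-1-j}/\Phi^*_n=\exp(\log\Phi^*_{n-1-j}-\log\Phi^*_n)$. Split the sum into $j\le n/2$ and $j>n/2$. For $j\le n/2$, the martingale increments $\log\Phi^*_n-\log\Phi^*_{n-1-j}$ are uniformly small in $L^2$ on $|z|\le r'$, with variance $\sum_{k\ge n/2}\ev|\alpha_k|^2 r'^{2}\cdots$, which is $O(1)$. Working with the martingale $\log\Phi^*$ localized on the event $\{\sup_k\|\log\Phi_k^*\|_{r'}\le K\}$, whose probability tends to $1$ as $K\to\infty$ by Proposition \ref{prop:martingale_conv}, the ratio is bounded by $e^{2K}$. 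The reverse-martingale orthogonality for $\alpha_{n-1-j}\Phi^*_{n-1-j}\mathbbm 1_{\{\text{stopping}\}}$ is preserved if we use a stopping time $\tau_K=\inf\{k:\|\log\Phi_k^*\|_{r'}>K\}$. On $\{\tau_K=\infty\}$ we get
\begin{align*}
\ev\big|S_n\mathbbm 1\big|^2\le e^{2K}\sum_j\frac{n}{\beta_{n-1-j}+1}r'^{2j}.
\end{align*}
The part $j\le n/2$ contributes $O(1)\sum r'^{2j}$, and the part $j>n/2$ contributes at most $n^2 r'^{n}\to0$. After multiplying by $\|1/\Phi_n^*\|\le e^K$, this gives a uniform second-moment bound on a high-probability event, and Markov's inequality yields the tightness.

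The main obstacle is making the stopping-time truncation compatible with the orthogonality: the truncated summand $\alpha_{n-1-j}\Phi^*_{n-1-j}\mathbbm 1_{\{\tau_K>n-1-j\}}$ must keep mean zero given the later coefficients. I would attempt this first, noting that $\{\tau_K>n-1-j\}\in\mathcal F_{n-1-j}$ while $\alpha_{n-1-j}$ is independent of $\mathcal F_{n-1-j}$; the cross terms then still cancel.
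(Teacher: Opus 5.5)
Your argument is correct, but it takes a genuinely different and heavier route than the paper's.

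\textbf{The paper's route.} It does not use the expansion \eqref{eqn::expansion} or Proposition~\ref{prop:martingale_conv} for tightness at all. From the M\"obius recursion $B_{n+1}=(zB_n-\bar\alpha_n)/(1-\alpha_n zB_n)$ and the inequality $|(w-\bar z)/(1-zw)|\le|w|+|z|$ on $\D$, it gets the pathwise bound $\|B_n\|_r\le r^n+\sum_{j=0}^{n-1}r^j|\alpha_{n-1-j}|$. Minkowski's inequality with $\ev|\alpha_k|^2=(1+\beta_k)^{-1}$ then gives $\ev\|\sqrt n B_n\|_r^2\le c_\beta(1-r)^{-2}$ uniformly in $n$. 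Markov's inequality and a union bound over the radii $r_m$ conclude. This argument needs only second moments of $|\alpha_k|$: no independence, rotational invariance, orthogonality, or control of $\Phi_k^*$. It also yields an unconditional uniform moment bound.

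\textbf{Your route.} You exploit the reverse martingale-difference orthogonality of the summands, which is an $\ell^2$ rather than $\ell^1$ summation. However, the ratio $\Phi^*_{n-1-j}/\Phi^*_n$ forces you to import the almost sure finiteness of $\sup_k\|\log\Phi_k^*\|_{r'}$ from Proposition~\ref{prop:martingale_conv}, which is the deep input from \cite{CN_GMC}. You also have to localize with $\tau_K$. As a result you only get a second-moment bound off an event of small probability, and your tightness statement inherits the dependence on that proposition.

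\textbf{Write-up points.}
\begin{itemize}
\item Discard the abandoned first attempt and the remark about the increments being small; neither is used.
\item State the key estimate for $S_n^K(z)=\sum_{j}\sqrt n\,\bar\alpha_{n-1-j}\Phi^*_{n-1-j}(z)\mathbbm{1}_{\{\tau_K>n-1-j\}}z^j$, which equals $S_n$ on $\{\tau_K=\infty\}$. Multiplying $S_n$ by the non-adapted indicator $\mathbbm{1}_{\{\tau_K=\infty\}}$, as your display literally suggests, would break the orthogonality.
\item For $S_n^K$ the cross terms do vanish. For $a<b$, the product $\bar\alpha_a\Phi_a^*\mathbbm{1}_{\{\tau_K>a\}}\overline{\Phi_b^*}\mathbbm{1}_{\{\tau_K>b\}}$ is $\mathcal{F}_b$-measurable, while $\alpha_b$ is independent of $\mathcal{F}_b$ with mean zero. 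This also covers the deterministic term $a=-1$.
\end{itemize}
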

	\begin{proof}
		Recall the Szeg\H o recursion \eqref{eq:Szego1}, we have 
		\begin{align*}
			B_{n+1}(z) = \frac{zB_n(z)-\bar \alpha_n}{1-\alpha_n zB_n(z)}.
		\end{align*}
		Using the fact that 
		\begin{align*}
			\left|\frac{w-\bar z}{1-zw}\right|\le \frac{|w|+|z|}{1+|wz|}\le |w|+|z|,\qquad w,z\in\D,
		\end{align*}
		we get
		\begin{align}\label{eq:B_rec_bound}
			|B_{n+1}(z)|\le |zB_n(z)|+|\alpha_{n}|.
		\end{align}
		For $0<r<1$ recall the norm $\|\cdot\|_r$ defined in \eqref{eq:metric}. By iterating the bound \eqref{eq:B_rec_bound} we have 
		\begin{align*}
			\|B_{n}\|_r \le r\|B_{n-1}\|_r +|\alpha_{n-1}|\le \cdots\le r^n+ \sum_{j=0}^{n-1} r^j|\alpha_{n-1-j}|. 
		\end{align*}
		Recall that $|\alpha_k|^2\sim \textup{Beta}(1,\beta_k)$, we have $\ev[|\alpha_k|^2]=\frac{1}{1+\beta_k}$  with $\beta_k=\frac{\beta}{2}(k+1)$. By Minkowski's inequality we have  
		\begin{align*}
			\ev[\|\sqrt{n}B_n\|_r^2]^{1/2}\le c_\beta \sqrt{n}\sum_{j=0}^n\frac{r^j}{\sqrt{n+1-j}},
		\end{align*}
		where $c_\beta$ is a constant depending only on $\beta$. By splitting the summation at $j=\lfloor n/2\rfloor$, we can find a deterministic constant $c>0$ such that 
		\[
		\sqrt{n}\sum_{j=0}^{\lfloor n/2\rfloor} \frac{r^j}{\sqrt{n+1-j}}\le \frac{c}{1-r},\qquad \sqrt{n}\sum_{j=\lfloor n/2\rfloor+1}^n \frac{r^j}{\sqrt{n+1-j}}\le cnr^{n/2}.
		\]
		Since $nr^{n/2}\le \frac{1}{1-r}$, we may enlarge $c_\beta$ (if necessary) to conclude that 
		\[
		\ev[\|\sqrt{n}B_n\|_r^2]\le \frac{c_\beta}{(1-r)^2},
		\]
		uniformly in $n$. We claim that this uniform bound also implies the tightness of the sequence $\sqrt{n}B_n,n\ge 1$. Indeed, for any $\eps>0$, we set $M_{m,\eps}=\sqrt{c_\beta 2^{3m}/\eps}$ and consider the compact set 
		$\cH_{\eps}:=\cap_{m\ge 1}\{f\in\cH:\|f\|_{r_m}\le M_{m,\eps}\}$. By Markov's inequality, we have 
		\begin{align*}
			\pr(\|\sqrt{n}B_n\|_{r_m}\ge M_{m,\eps})\le \eps 2^{-m}.
		\end{align*}
		Taking the union bound in $m$ gives that $\pr(\sqrt{n}B_n\notin \cH_{\eps})\le \eps$ for all $n$. This completes the proof of the statement. 
	\end{proof}

	We are now ready to prove Theorem \ref{thm:main}.
	\begin{proof}[Proof of Theorem~\ref{thm:main}]
		Proposition \ref{prop:weak_conv_coeff} proves the weak convergence of the Taylor coefficients in the power series, and identifies any subsequential limit. Proposition \ref{prop:tight} gives the necessary tightness to conclude the convergence in distribution in $\cH$, with respect to the topology of uniform convergence on compact sets of $\D$. Combining these two results completes the proof.
	\end{proof}
	To prove Corollary \ref{cor:bulk}, we invoke the following fact from complex analysis. 
	\begin{fact}\label{fact:Hurwitz}
		For $f\in\cH\setminus\{0\}$, let $\mathcal{Z}(f)=\sum_{w\in\D:f(w)=0}\textup{mult}_f(w)\delta_w$, where $\textup{mult}_f(w)$ counts the multiplicity of the zero $w$. Then the map $\mathcal{Z}$ is continuous from $\cH\setminus\{0\}$ with the topology of uniform convergence on compact sets of $\D$, to the space of locally finite point measures on $\D$ with respect to the vague topology.
		
		Consequently, if $f_n,n\ge 1$ and $f$ are random holomorphic functions that are a.s.~not identically equal to 0, and  $f_n\Rightarrow f$ in law as $n\to\infty$ with respect to the topology of uniform convergence on compact sets of $\D$, then $\mathcal{Z}(f_n)\Rightarrow \mathcal{Z}(f)$ in the vague topology. 
	\end{fact}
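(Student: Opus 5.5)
The plan is to prove the two parts of Fact~\ref{fact:Hurwitz} separately: the deterministic continuity of $\mathcal{Z}$ follows from Hurwitz's theorem, and the random statement then follows from the continuous mapping theorem.

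\emph{Continuity of $\mathcal{Z}$.} Take $f_n\to f$ uniformly on compacts, with $f\not\equiv 0$. Vague convergence means $\int \varphi\, d\mathcal{Z}(f_n)\to\int\varphi\, d\mathcal{Z}(f)$ for every continuous $\varphi$ with compact support in $\D$.
\begin{enumerate}
\item Fix such a $\varphi$ and choose $r<1$ with $\operatorname{supp}\varphi\subset\D_r$. Since $f\not\equiv0$, its zeros are isolated, so we may adjust $r$ so that $f$ has no zeros on $|z|=r$.
\item By uniform convergence, for large $n$ the function $f_n$ also has no zeros on $|z|=r$.
\item By the argument principle, $f_n$ and $f$ have the same number $N$ of zeros in $\D_r$, counted with multiplicity.
\item Let $w_1,\dots,w_p$ be the distinct zeros of $f$ in $\D_r$, with multiplicities $m_1,\dots,m_p$ summing to $N$. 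Fix $\delta>0$ so small that the discs $D(w_i,\delta)$ are disjoint and contained in $\D_r$.
\item Hurwitz's theorem, which is the argument principle applied on each circle $|z-w_i|=\delta$, shows that for large $n$ the function $f_n$ has exactly $m_i$ zeros in $D(w_i,\delta)$. Since the total count is $N$, $f_n$ has no other zeros in $\D_r$.
\item Therefore $\bigl|\int\varphi\, d\mathcal{Z}(f_n)-\int\varphi\, d\mathcal{Z}(f)\bigr|\le N\,\omega_\varphi(\delta)$, where $\omega_\varphi$ is the modulus of continuity of $\varphi$. Letting $\delta\to0$ proves convergence.
\end{enumerate}
The delicate point is choosing $r$ to avoid zeros of $f$ on the boundary circle; this is possible precisely because $f\not\equiv 0$.

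\emph{The random statement.} The subset $\cH\setminus\{0\}$ is open in $(\cH,d_{\cH})$, so it is a separable metric space. The functions $f_n$ and $f$ take values in it almost surely, and $f_n\Rightarrow f$ in $\cH$ restricts to weak convergence in $\cH\setminus\{0\}$. This is because $\pr(f\in\cH\setminus\{0\})=1$, so every closed set of the subspace is the trace of a closed set of $\cH$ and the portmanteau theorem applies. The continuous mapping theorem, applied with the continuous map $\mathcal{Z}$, then gives $\mathcal{Z}(f_n)\Rightarrow\mathcal{Z}(f)$ in the vague topology.

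For Corollary~\ref{cor:bulk}, we apply this with $f_n=\sqrt n B_n$, which is nonzero a.s., and $f=\sqrt{2/\beta}f_{\textup{GAF}}$, which is a.s.\ nonzero since $\xi_0\neq0$ a.s. Multiplying by constants does not change zeros, so $\mathcal{Z}(f_n)=\Lambda_n$ and $\mathcal{Z}(f)=\Lambda$, which gives the corollary.
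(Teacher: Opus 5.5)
Your proposal is correct and takes the same route as the paper: Hurwitz's theorem (via the argument principle) gives the deterministic continuity of $\mathcal{Z}$, and the continuous mapping theorem gives the random statement. The paper only cites these two results; you additionally supply the details it omits, namely choosing a zero-free circle $|z|=r$, the bound $N\,\omega_\varphi(\delta)$, and the reduction to weak convergence on the subspace $\cH\setminus\{0\}$ via the portmanteau theorem.
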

	\begin{proof}
		The deterministic statement follows from Hurwitz’s theorem (see e.g.~Theorem VII.2.5 in \cite{Conway1978}). The probabilistic statement then follows from the continuous mapping theorem.
	\end{proof}
	\begin{proof}[Proof of Corollary \ref{cor:bulk}]
		Since $B_n(z),n\ge 1$ and $f_{\textup{GAF}}(z)$ are a.s.~not identically zero, the statement follows from Theorem \ref{thm:main} and Fact \ref{fact:Hurwitz}.
	\end{proof}
	\subsection{Limits of the multiplicatively perturbed circular beta ensemble}\label{subsec:perturb}
	Recall the rank-one multiplicative perturbation of a unitary matrix ${\bf M}$ defined in \eqref{def:mult_pert}.
	In \cite{ForIpsen}, Forrester and Ipsen studied the asymptotic behavior of the zeros of ${\bf M}_n^{[r_n]}$, where ${\bf M}_n$ is a size $n$ CUE (corresponding to the $\beta=2$ case) and $r_n=O(n^{-1/2})$. Their result can be summarized as follows.
	\begin{proposition}[\cite{ForIpsen}]\label{prop:ForIpsen}
		Let ${\bf M}_n\in U(n)$ be uniformly chosen with Haar measure on $U(n)$, and set $r_n = r/\sqrt{n}$ with $r$ fixed. Then the eigenvalues of ${\bf M}_n^{[r_n]}$ converge weakly as $n\to\infty$ to the zeros of the random series 
		\begin{align}\label{eq:GAF^mu}
			f_{\textup{GAF},r}(z):= r +\sum_{j=1}^\infty \xi_j z^j,
		\end{align}
		Here $\xi_j,j\ge 1$ is a sequence of i.i.d.~standard complex Gaussian variables.  
	\end{proposition}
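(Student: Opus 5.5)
The plan is to deduce Proposition~\ref{prop:ForIpsen} from the $\beta=2$ case of Theorem~\ref{thm:perturb}, and to prove that theorem by rerunning the argument for Theorem~\ref{thm:main}. For $\beta=2$ we have $r_n=\sqrt{2/(\beta n)}\,r=r/\sqrt n$, which matches the scaling in Proposition~\ref{prop:ForIpsen}. The limit in \eqref{eq:perturb_conv} becomes $r\eta+\sum_{j\ge1}\xi_jz^j$. Because $(\xi_j)_{j\ge1}$ is rotationally invariant and independent of $\eta$, we have $(\xi_j)\ed(\eta\xi_j)$, so this limit equals in law $\eta\, f_{\textup{GAF},r}(z)$. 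Multiplying by the unimodular constant $\eta$ does not change the zero set. Moreover $f_{\textup{GAF},r}$ is a.s.\ not identically zero, and neither is the Blaschke product. Fact~\ref{fact:Hurwitz} therefore turns convergence of $\sqrt n B^{[r_n]}_{n,2}$ in $\cH$ into vague convergence of the eigenvalues inside $\D$. All eigenvalues of the sub-unitary matrix $\Circ^{[r_n]}_{n,2}$ lie in $\overline{\D}$, and none lie on $\partial\D$ a.s.\ for $|r_n|<1$.

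So the real work is Theorem~\ref{thm:perturb}. By Proposition~\ref{prop:tcirc_matrix} I would work with $\cC_n^{[r_n]}$ from Assumption~\ref{assumption}. Its Verblunsky coefficients are $\alpha_0,\dots,\alpha_{n-2}$ together with $\alpha'_{n-1}:=r_n\eta$. The first $n-1$ steps of the Szeg\H o recursion are unchanged, so Propositions~\ref{prop:CMV1} and \ref{prop:CMV3} give $B^{[r_n]}_n=\Phi_n/\Phi_n^*$. The expansion \eqref{eqn::expansion} then holds verbatim, except that the $j=0$ term is $-\sqrt n\,\overline{r_n\eta}\,\Phi^*_{n-1}/\Phi_n^*$. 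The last step also changes $\Phi_n^*=\Phi^*_{n-1}-r_n\eta\, z\Phi_{n-1}$. Since $r_n\to0$ and $|zB_{n-1}|<1$, we get $\Phi_n^*/\Phi_{n-1}^*\to1$ uniformly on compacts. Combined with Proposition~\ref{prop:martingale_conv}, the contour-integral limit \eqref{eqn::coe_aux} still holds.

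For the Taylor coefficients, the $j=0$ coefficient is $\sqrt n\,r_n\bar\eta=\sqrt{2/\beta}\,r\bar\eta$ exactly. The coefficients $\sqrt n\,\bar\alpha_{n-1-j}$ with $1\le j\le\ell$ converge jointly to $\sqrt{2/\beta}(\xi_1,\dots,\xi_\ell)$ by Fact~\ref{fact:Gaussianity}, independently of $\eta$. Since $\bar\eta\ed\eta$, Slutsky's theorem gives the analogue of Proposition~\ref{prop:weak_conv_coeff}. Tightness follows by the proof of Proposition~\ref{prop:tight}. The bound \eqref{eq:B_rec_bound} is unchanged, and its last step contributes $|r_n|=O(n^{-1/2})$, which gives the same uniform bound $\ev\|\sqrt nB_n^{[r_n]}\|_r^2\le c_{\beta,r}(1-r)^{-2}$.

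The only step needing care is the modified last Szeg\H o step. One must make sure it does not spoil the ratio limit \eqref{eqn::coe_aux}, and one must have convergence of $\Phi^*_{n-1}$ along the coupled sequence. Both follow from $r_n\to0$ and the a.s.\ uniform convergence in Proposition~\ref{prop:martingale_conv}. One final point is that eigenvalues could escape to $\partial\D$, where vague convergence on $\D$ does not track them; I expect to read ``converge weakly'' in Proposition~\ref{prop:ForIpsen} as vague convergence on $\D$.
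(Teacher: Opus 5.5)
Your proposal is correct. Its last step matches how the paper gets from Theorem~\ref{thm:perturb} to Proposition~\ref{prop:ForIpsen}: specialize to $\beta=2$ (so $r_n=r/\sqrt n$), use rotation invariance to identify the limit with $\eta f_{\textup{GAF},r}$, and apply Fact~\ref{fact:Hurwitz}.

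Where you differ is in the proof of Theorem~\ref{thm:perturb} itself. You rerun the whole machinery for the perturbed sequence: the expansion \eqref{eqn::expansion} with a modified $j=0$ term and modified denominator $(\Phi_n^{[r_n]})^*$, the contour-integral limits, and the tightness bound with $|r_n|$ in place of $|\alpha_{n-1}|$. The paper instead uses Theorem~\ref{thm:main} as a black box. One Szeg\H o step gives
\[
\sqrt n B_n^{[r_n]}(z)=\frac{z\sqrt n B_{n-1}(z)-\sqrt{2/\beta}\,r\bar\eta}{1-r_n\eta zB_{n-1}(z)},
\]
and then:
\begin{enumerate}
\item[(a)] the denominator tends to $1$ uniformly on $\D$, since $|r_n\eta zB_{n-1}|\le|r_n|$;
\item[(b)] $\sqrt n B_{n-1}\Rightarrow\sqrt{2/\beta}f_{\textup{GAF}}$ by Theorem~\ref{thm:main}, using $\sqrt{n/(n-1)}\to1$;
\item[(c)] $\eta$ is independent of $B_{n-1}$, so Slutsky's theorem together with the continuous mapping theorem finishes.
\end{enumerate}
The ratio $(\Phi_n^{[r_n]})^*/\Phi_{n-1}^*=1-r_n\eta zB_{n-1}$ that you need for \eqref{eqn::coe_aux} is exactly this denominator.

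The paper's route avoids redoing both the coefficient convergence and the tightness estimate. Yours is self-contained and shows explicitly where the deterministic constant term $\sqrt{2/\beta}\,r\bar\eta$ enters the Taylor expansion. The cost is duplicating Propositions~\ref{prop:weak_conv_coeff} and~\ref{prop:tight}.

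A few small details to tidy:
\begin{enumerate}
\item[(i)] Your coefficient limit is $-\sqrt{2/\beta}\,(r\bar\eta,\bar\xi_1,\dots)$, so you need $-\bar\eta\ed\eta$, not just $\bar\eta\ed\eta$.
\item[(ii)] The rotation-invariance step needs the joint identity $(\eta,(\xi_j))\ed(\eta,(\eta\xi_j))$. Both this and (i) hold by independence.
\item[(iii)] To pass from Haar $M_n$ to the CMV model, state that the spectral measure of $M_n$ at $\mathbf{e}_1$ is $\mu^{\textup{KN}}_{n,2}$. Then Proposition~\ref{prop:CMV3} applies directly. The paper also leaves this implicit.
\end{enumerate}
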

	The function $f_{\textup{GAF},r}(\cdot)$ can be viewed as the Gaussian analytic function $f_{\textup{GAF}}(\cdot)$  conditioned on its value at $0$. 
	Following the result of Forrester and Ipsen, it is natural to study the asymptotic behavior of zeros of the rank-one multiplicative perturbation of the circular beta ensemble for general $\beta>0$. To this end, we derive the scaling limit of the normalized Blaschke product of the perturbed model using an argument similar to the proof of Theorem \ref{thm:main}. This convergence is strong enough to extend Proposition \ref{prop:ForIpsen} to general $\beta>0$. 
	
	\begin{proof}[Proof of Theorem \ref{thm:perturb}]
		Fix $r\in\R$ and we assume $n$ is large enough such that $|r_n|=|\sqrt{\frac{2}{\beta n}}r|<1$. Let $\alpha_k,k\ge 0$ and $\eta$ be defined as in Assumption \ref{assumption}. By Propositions \ref{prop:CMV3} and \ref{prop:tcirc_matrix}, the eigenvalues of $\Circ_{n,\beta}^{[r_n]}$ have the same joint distribution as the (sub-unitary) CMV matrix $\cC(\alpha_0,\dots,\alpha_{n-2}, r_n\eta )$. 
		
		Let $\Phi_{k}^{[r_n]},0\le k\le n$ be the monic orthogonal polynomials with respect to the sequence of Verblunsky coefficients $(\alpha_0,\alpha_1,\dots, \alpha_{n-2},r_n\eta )$, and let $(\Phi_{k}^{[r_n]})^*$ be the associated reversed polynomials. By the Szeg\H o recursion \eqref{eq:Szego1}, these polynomials do not depend on $r$ for $0\le k\le n-1$, and we may therefore suppress the $r$-dependence in the notation for these functions.
		Consequently, Theorem \ref{thm:main} and Proposition \ref{prop:martingale_conv} remain valid for the sequences $B_{n-1}$ and $(\Phi_{n-1}^{[r_n]})^*=\Phi_{n-1}^*$. 
		
		Applying Szeg\H o recursion \eqref{eq:Szego1} to $\Phi_n^{[r_n]}$ and $(\Phi_n^{[r_n]})^*$, we have 
		\begin{align*}
			\Phi_{n}^{[r_n]}(z) = z\Phi_{n-1}(z)- r_n\bar \eta \Phi_{n-1}^*(z),\qquad (\Phi_{n}^{[r_n]})^*(z) &= \Phi_{n-1}^*(z) -z r_n \eta \Phi_{n-1}(z),
		\end{align*}
		hence
		\begin{align*}
			\sqrt{n}B_{n}^{[r_n]}(z)=\sqrt{n}\frac{\Phi_{n}^{[r_n]}(z)}{(\Phi_{n}^{[r_n]})^*(z)} =\frac{z\sqrt{n}B_{n-1}(z)-\sqrt{\tfrac{2}{\beta}}r\bar \eta}{1-r_n\eta zB_{n-1}(z)}.
			% &= \sqrt{n}\frac{\Phi_{n}^{[r_n]}(z)}{\Phi_{n-1}^*(z)}\cdot\frac{\Phi_{n-1}^*(z)}{(\Phi_{n}^{[r_n]})^*(z)}=\sqrt{n}\frac{z\Phi_{n-1}(z)-r_n\bar\eta  \Phi_{n-1}^*(z)}{\Phi_{n-1}^*(z)}\cdot\frac{\Phi_{n-1}^*(z)}{(\Phi_{n}^{[r_n]})^*(z)}.
		\end{align*}
		Since $r_n=O(n^{-1/2})$ and $|B_{n-1}(z)|<1$ for $z\in\D$, the denominator converges to $1$ uniformly on compact sets of $\D$ (see also Theorem 1.7.4 of \cite{OPUC1}). By Theorem \ref{thm:main},  Slutsky's theorem and the independence of $\eta$ from $B_{n-1}$, we conclude that 
		\begin{align*}
			\sqrt{n}B_{n}^{[r_n]}(z) \Rightarrow \sqrt{\frac{2}{\beta}}(zf_{\textup{GAF}}(z) - r\bar \eta)
		\end{align*}
		% The recursion \eqref{eq:Szego_Phin*} together with Theorem \ref{thm:main} imply that $|\frac{(\Phi_{n}^{[r_n]})^*(z)}{\Phi_{n-1}^*(z)}-1|$ is of order $O(n^{-1})$ uniformly on compacts in $\D$. (One may also use Theorem 1.7.4 of \cite{OPUC1} directly to conclude that this difference is $o(n^{-1/2})$.)
		in distribution as $n\to\infty$ uniformly on compact sets of $\D$. Since $zf_{\textup{GAF}}(z) - r\bar \eta\ed r\eta+\sum_{j=1}^\infty \xi_j z^j$, we conclude the proof of Theorem \ref{thm:perturb}. 
		% \color{red}
		% We cannot get rid of $\eta$
		% at the level of functions, only at the level of their zeros. 
		% \color{black}\textcolor{blue}{Yun: Good point, you are right.}
		
		Repeating the proof of Corollary \ref{cor:bulk} and applying Theorem \ref{thm:perturb}, we conclude that the zeros of the multiplicatively perturbed C$\beta$E converge to those of the random series $zf_{\textup{GAF}}(z) - r\bar \eta$, independent of $\beta$. Using the fact that the Gaussian variables are rotation invariant, the limiting zero set has the same distribution as the zero set of the conditional Gaussian series $f_{\textup{GAF},r}(\cdot)$ as claimed.
	\end{proof}

	\section{Asymptotic independence of $\Phi_n^*$ and the bulk zeros }\label{sec:lss}
	Let $\Lambda_n=\{z_1^{(n)},z_{2}^{(n)},\dots,z_n^{(n)}\}$ be distributed according to the size $n$ truncated C$\beta$E and set $\Lambda = \{z\in\D:f_{\textup{GAF}}(z)=0\}$. Recall the reversed characteristic polynomial $\Phi_n^*$ of $\Lambda_n$. 
	The goal of this section is to prove the asymptotic independence of $\Phi_n^*$ and the linear statistics of $\Lambda_n$ of compactly supported test functions. Heuristically speaking, the reversed polynomial $\Phi_k^*$ is determined by the first $k$ Verblunsky coefficients, and is close to its limit $\Phi_\infty^*$ when $k$ is sufficiently large (see Proposition \ref{prop:martingale_conv}). In contrast, the bulk zeros, namely those lying inside a compact subset of $\D$, are mainly determined by the first few terms in the series expansion of $\sqrt{n}B_n$. As shown in Proposition \ref{prop:weak_conv_coeff}, these Taylor coefficients depend on the Verblunsky coefficients in reverse order.

	Before we present the result, let us introduce a sequence of auxiliary functions 
	\begin{equation}\label{eq:aux}
		B^{(m)}_m=0,\qquad B^{(m)}_{n+1}(z) = \frac{zB_n^{(m)}(z)-\bar \alpha_n}{1-\alpha_n z B_n^{(m)}(z)},\qquad n\ge m.
	\end{equation}
	The function $B^{(m)}_n$ depends on $\alpha_m,\alpha_{m+1},\cdots,\alpha_{n-1}$, and is independent of $\cF_m$. By the inequality $|w-\bar z|\le |1-w z|$, it is direct to check that $|B_n^{(m)}|\le 1$ on $\D$. Recall the metric $d_\cH$ defined in \eqref{eq:metric}, we obtain the following bound on $B_n-B_n^{(m)}$. 
	\begin{proposition}\label{prop:Bn_aux}
		Consider the probability space defined in Assumption \ref{assumption}. For $0<r<1$ and $m< n$, we have
		\begin{align*}
			\|B_n-B_n^{(m)}\|_r \le 2r^{n-m}.
		\end{align*}
		Moreover, for any fixed $m$, we have $d_\cH(\sqrt{n}B_n,\sqrt{n}B_n^{(m)})\to 0$ a.s.~as $n\to\infty$.
	\end{proposition}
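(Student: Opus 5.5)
Both $B_n$ and $B_n^{(m)}$ satisfy the same recursion for indices $k\ge m$, namely $B_{k+1}=T_{k,z}(B_k)$ with
\[
T_{k,z}(w)=\frac{zw-\bar\alpha_k}{1-\alpha_k z w}.
\]
The only difference is the starting value at step $m$: $B_m$ for the first, and $0$ for the second. The plan is to show that each step contracts the difference by a factor $|z|\le r$. Iterating this $n-m$ times, together with the initial bound $|B_m-0|\le 1$, gives a bound of order $r^{n-m}$. The constant $2$ leaves room for a slightly lossy contraction estimate, or for the initial difference being bounded by $2$.

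\textbf{Step 1 (contraction).} Fix $z$ with $|z|\le r$, and take $w_1,w_2$ in the closed unit disk. Write $a=\alpha_k$ and $u_i=zw_i$, so that $|u_i|\le r<1$. Compute
\[
\frac{u_1-\bar a}{1-au_1}-\frac{u_2-\bar a}{1-au_2}=\frac{(u_1-u_2)(1-|a|^2)}{(1-au_1)(1-au_2)}.
\]
If $\sup|T_k'|\le 1$ on the disk of radius $r$, then $|T_k(w_1)-T_k(w_2)|\le |z||w_1-w_2|$, but this bound fails in general. The factor $\frac{1-|a|^2}{|1-au_1||1-au_2|}$ can exceed $1$. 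The better tool is the Schwarz--Pick invariance: the pseudo-hyperbolic distance
\[
\rho(u_1,u_2)=\left|\frac{u_1-u_2}{1-\bar u_2u_1}\right|
\]
is preserved by the disk automorphism $u\mapsto \frac{u-\bar a}{1-au}$, and multiplication by $z$ gives $\rho(zw_1,zw_2)\le |z|\,\rho(w_1,w_2)$ by Schwarz--Pick. So in $\rho$ we get a clean contraction by $r$ per step. The initial value satisfies $\rho(B_m,0)=|B_m|\le1$, hence
\[
\rho(B_n,B_n^{(m)})\le r^{n-m}.
\]
We then convert back to Euclidean distance using
\[
|w_1-w_2|=\rho\,|1-\bar w_2w_1|\le 2\rho .
\]
This gives $|B_n(z)-B_n^{(m)}(z)|\le 2r^{n-m}$, which is exactly where the $2$ comes from.

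There is a subtlety here: the Schwarz--Pick contraction of $w\mapsto zw$ for fixed $z$ is a Schwarz-lemma statement. The map is a self-map of the disk fixing $0$, and $\rho(zw_1,zw_2)\le |z|\rho(w_1,w_2)$ holds for a rotation-dilation. I would verify this directly: the ratio equals $|z|\frac{|1-\bar w_2w_1|}{|1-|z|^2\bar w_2 w_1|}$, and we need $|1-|z|^2 t|\ge |1-t|$ can fail. So I expect this to be the main obstacle and would double-check it. If the inequality fails for boundary values, I would instead use the standard Schwarz--Pick contraction in $\rho$ with factor $1$, extracting the $r$ differently. The alternative is to note that the left side as a function of $z$ is holomorphic and bounded by $2$ on $\D$, and by the first-step argument it vanishes to order $n-m$ at $z=0$. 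Since $B_n$ and $B_n^{(m)}$ agree in their Taylor coefficients up to degree $n-m-1$ (induction on the recursion: the first $k-m$ coefficients of $B_k$ depend only on $\alpha_m,\dots,\alpha_{k-1}$), the Schwarz lemma applied to $(B_n-B_n^{(m)})/2$ gives $|B_n-B_n^{(m)}|\le 2|z|^{n-m}$. This second route is cleaner and robust, and I would adopt it as the main argument.

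\textbf{Step 2 (the $d_\cH$ statement).} For fixed $m$ and each $r_j$ we have
\[
\|\sqrt n B_n-\sqrt n B_n^{(m)}\|_{r_j}\le 2\sqrt n\, r_j^{\,n-m}\to0
\]
deterministically. The series defining $d_\cH$ has terms bounded by $2^{-j}$, so dominated convergence for series yields $d_\cH\to0$ surely, hence a.s.
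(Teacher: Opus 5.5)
Your adopted argument is the paper's proof: you show $B_n-B_n^{(m)}$ vanishes to order $n-m$ at $0$ (equivalently, you use the factor $z$ in the one-step difference formula you computed) and then apply the Schwarz lemma to $(B_n-B_n^{(m)})/2$. You were right to discard the pseudo-hyperbolic route, since $\rho(zw_1,zw_2)\le |z|\rho(w_1,w_2)$ genuinely fails (take $w_2=-w_1$ near the boundary). For the second claim, your termwise dominated-convergence argument is a valid but purely qualitative substitute for the paper's explicit bound $d_\cH\le c(1+\log n)/(n-m+1)$, which the paper gets by comparing the series with an integral.
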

	\begin{proof}
		By definition, we have 
		\begin{align*}
			B_{n+1}(z)-B_{n+1}^{(m)}(z) = \frac{\rho_n^2z(B_n(z)-B_n^{(m)}(z))}{(1-\alpha_n zB_n(z))(1-\alpha_n zB_n^{(m)}(z))}.
		\end{align*}
		Since $|B_n(z)|\le 1,|B_n^{(m)}(z)|\le 1$ for $z\in \D$, we get the difference $B_{n+1}-B_{n+1}^{(m)}$ is a multiple of $z$. By induction, one can check that $B_n-B_n^{(m)}$ is a multiple of $z^{n-m}$, i.e., it has a zero of order at least $n-m$ at $z=0$. This implies that the first $n-m$ Taylor coefficients of $B_n$ and $B_n^{(m)}$ agree with each other. Repeating the proof of the Schwarz lemma proves the first claim. More precisely, let $g(z) = z^{-(n-m)}(B_n(z)-B_n^{(m)}(z))$. For $|z|=r<1$, by the maximum principle we have $|g(z)| \le 2r^{-(n-m)}$. Letting $r\uparrow 1$ yields that $|B_n(z)-B_n^{(m)}(z)|\le 2|z|^{n-m}$ as claimed. 
		
		We now prove the second claim. For $n> m$ we have 
		\begin{align*}
			d_\cH(\sqrt{n}B_n,\sqrt{n}B_n^{(m)})&\le \sum_{j\ge 1}2^{-j}\min\{1,2\sqrt{n}r_j^{n-m}\}\le 2\int_0^1 \min\{1,2\sqrt{n}r^{n-m}\}dr.
		\end{align*}
		By splitting the integral at $r_0=(2\sqrt{n})^{-1/(n-m)}$, we get the upper bound
		\[
		d_\cH(\sqrt{n}B_n,\sqrt{n}B_n^{(m)})\le 2\Big(\frac{r_0}{n-m+1}+1-r_0\Big)\le \frac{c(1+\log n)}{n-m+1},
		\]
		% \[
		% d_\cH(\sqrt{n}B_n,\sqrt{n}B_n^{(m)})\le \frac{2\sqrt{n}r_0^{-1/(n-m+1)}}{n-m+1}+1-r_0\le c\frac{\log(2\sqrt{n})}{n-m},
		% \]
		where $c$ is an absolute constant independent of $n,m$.  Sending $n\to\infty$ completes the proof.
	\end{proof} 
	For a continuous function $g$ with compact support inside the unit disk, we denote by $L_g(\Lambda_n):=\sum_{z\in\Lambda_n}g(z)$ the linear statistics of $\Lambda_n$. The next result proves the asymptotic independence of $(\Phi_n^*,L_g(\Lambda_n))\in \cH\times \C$.
	\begin{proposition}\label{prop:asymp_ind}
		Consider the probability space in Assumption \ref{assumption}. For any continuous compactly supported test function $g$, we have $(\Phi_n^*,L_g(\Lambda_n))\Rightarrow (\Phi_\infty^*,L_g(\Lambda))$, where the limit $L_g(\Lambda)=\sum_{z\in\Lambda}g(z)$ is independent of $\Phi_\infty^*$.
	\end{proposition}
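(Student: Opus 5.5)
The plan is to decouple the two components via the auxiliary functions $B_n^{(m)}$ of \eqref{eq:aux}. Fix $m$. By construction $\Phi_m^*$ is $\cF_m$-measurable, while $B_n^{(m)}$ depends only on $\alpha_m,\dots,\alpha_{n-1}$ and is therefore independent of $\cF_m$. It suffices to show that for bounded Lipschitz $F:\cH\to\R$ and $h:\C\to\R$,
\begin{align*}
\ev[F(\Phi_n^*)h(L_g(\Lambda_n))]\to \ev[F(\Phi_\infty^*)]\,\ev[h(L_g(\Lambda))].
\end{align*}
We will use a diagonal approximation in $m$.

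\emph{Step 1 (replace $\Phi_n^*$ by $\Phi_m^*$).} By Proposition \ref{prop:martingale_conv}, $d_\cH(\Phi_n^*,\Phi_m^*)\le d_\cH(\Phi_n^*,\Phi_\infty^*)+d_\cH(\Phi_\infty^*,\Phi_m^*)$. This tends to $0$ a.s. if we let $n\to\infty$ first and then $m\to\infty$. Hence $|\ev[F(\Phi_n^*)h(\cdot)]-\ev[F(\Phi_m^*)h(\cdot)]|\le \|h\|_\infty\ev[\min(1,\mathrm{Lip}(F)\,d_\cH(\Phi_n^*,\Phi_m^*))]$, which is small uniformly in the second argument. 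The same holds with $\Phi_\infty^*$ in place of $\Phi_n^*$.

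\emph{Step 2 (replace $L_g(\Lambda_n)$ by the zeros of $B_n^{(m)}$).} Let $\Lambda_n^{(m)}$ be the zero set of $B_n^{(m)}$ in $\D$, counted with multiplicity. Proposition \ref{prop:Bn_aux} gives $d_\cH(\sqrt n B_n,\sqrt n B_n^{(m)})\to0$ a.s. Theorem \ref{thm:main} gives $\sqrt n B_n\Rightarrow \sqrt{2/\beta}f_{\textup{GAF}}$. Combining these, we get the joint convergence $(\sqrt nB_n,\sqrt nB_n^{(m)})\Rightarrow (G,G)$ with $G\ed\sqrt{2/\beta}f_{\textup{GAF}}$. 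The map $f\mapsto L_g(\mathcal Z(f))$ is continuous on $\cH\setminus\{0\}$ by Fact \ref{fact:Hurwitz}, since $g$ is continuous and compactly supported. By the continuous mapping theorem, $L_g(\Lambda_n)-L_g(\Lambda_n^{(m)})\to0$ in probability for each fixed $m$. We also use that $\mathcal Z(\sqrt{2/\beta}f_{\textup{GAF}})=\Lambda$ and that $f_{\textup{GAF}}\ne0$ a.s. Consequently $L_g(\Lambda_n^{(m)})\Rightarrow L_g(\Lambda)$ for every fixed $m$. Note that $B_n^{(m)}$ is not identically zero, since $\alpha_{n-1}\ne0$ a.s.

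\emph{Step 3 (factorize).} Independence gives
\begin{align*}
\ev[F(\Phi_m^*)h(L_g(\Lambda_n^{(m)}))]=\ev[F(\Phi_m^*)]\,\ev[h(L_g(\Lambda_n^{(m)}))].
\end{align*}
Let $n\to\infty$: the right side tends to $\ev[F(\Phi_m^*)]\ev[h(L_g(\Lambda))]$, and Step 2 controls the swap back to $L_g(\Lambda_n)$. Then let $m\to\infty$ and use Step 1. This yields the claimed limit, so the joint law converges to the product of the laws of $\Phi_\infty^*$ and $L_g(\Lambda)$.

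The main obstacle is Step 2. We need a convergence-in-probability statement for the difference of linear statistics, not merely convergence in law of each term. I plan to handle it by proving the joint convergence of the pair $(\sqrt nB_n,\sqrt nB_n^{(m)})$ in $\cH^2$ to the diagonal $(G,G)$ as above. Then $|h(L_g(\mathcal Z(f_1)))-h(L_g(\mathcal Z(f_2)))|$ is a bounded function continuous at the diagonal points with $f_1=f_2\ne0$, and its expectation tends to $0$.
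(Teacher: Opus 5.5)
Your proposal is correct and follows essentially the same route as the paper. You fix $m$ and use the auxiliary functions $B_n^{(m)}$, which are independent of $\cF_m$. You combine Proposition~\ref{prop:Bn_aux} with Theorem~\ref{thm:main} to get $(\sqrt n B_n,\sqrt n B_n^{(m)})\Rightarrow (X,X)$, and transfer this to the linear statistics via Fact~\ref{fact:Hurwitz}. You then factorize $\ev[F(\Phi_m^*)h(L_g(\Lambda_n^{(m)}))]$ by independence and send $n\to\infty$ and then $m\to\infty$, using the a.s.\ convergence $\Phi_m^*\to\Phi_\infty^*$.

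Your explicit check that $B_n^{(m)}(0)=-\bar\alpha_{n-1}\neq 0$ a.s., so that $B_n^{(m)}\not\equiv 0$, is a detail the paper leaves implicit. In Step 1 the bound should read $\min\{2\|F\|_\infty,\mathrm{Lip}(F)\,d_\cH(\Phi_n^*,\Phi_m^*)\}$ rather than $\min\{1,\dots\}$, which does not affect the argument.
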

	\begin{proof}
		By Corollary \ref{cor:bulk} and Proposition \ref{prop:martingale_conv}, we have the marginal convergence $L_g(\Lambda_n)\Rightarrow L_g(\Lambda)$ and $\Phi_n^*\to\Phi_\infty^*$ a.s.~uniformly on compacts as $n\to\infty$. It remains to prove the joint convergence. 
		
		For any fixed $m$, recall the auxiliary function $B_n^{(m)}$ defined in \eqref{eq:aux}. For $n>m$ we set $\Lambda_n^{(m)}=\{z\in \D:B_n^{(m)}(z)=0\}$, and $L_g(\Lambda_n^{(m)})=\sum_{z\in \Lambda_n^{(m)}}g(z)$. By definition, we have $\Lambda_n^{(m)}$ and $L_g(\Lambda_n^{(m)})$ are independent of $\cF_m$.

		Let $F:\cH(\D)\to \C$ and $G:\C\mapsto \C$ be bounded continuous functions. By the triangle inequality we have  
		\begin{align}
			&\big|\ev[F(\Phi_n^*)G(L_g(\Lambda_n))]-\ev[F(\Phi_\infty^*)]\ev[G(L_g(\Lambda))]\big|\notag\\
			&\quad\le  \ev\Big[\big|(F(\Phi_n^*)-F(\Phi_m^*))G(L_g(\Lambda_n))\big|\Big] + \ev\Big[\big|F(\Phi_m^*)(G(L_g(\Lambda_n))-G(L_g(\Lambda_n^{(m)}))\big|\Big]\\
			&\qquad + \big|\ev[F(\Phi_m^*)]-\ev[F(\Phi_\infty^*)]\big|\ev[|G(L_g(\Lambda_n^{(m)}))|]\label{eq:ind_triangle}\\
			& \qquad +\ev[|F(\Phi_\infty^*)|]\big|\ev[G(L_g(\Lambda_n^{(m)}))]-\ev[G(L_g(\Lambda))]\big|,
		\end{align}
		where in \eqref{eq:ind_triangle} we used the fact that $\Phi_m^*$ is independent of $\Lambda_n^{(m)}$.
		By Theorem \ref{thm:main} and Proposition \ref{prop:Bn_aux}, we have 
		\[
		(\sqrt{n}B_n,\sqrt{n}B_n^{(m)})\Rightarrow (X,X)\in \cH\times \cH,\qquad X\ed \sqrt{\tfrac{2}{\beta}}f_{\textup{GAF}},
		\]
		in law as $n\to\infty$, with respect to the topology of uniform convergence on compact sets of $\D$. By Fact \ref{fact:Hurwitz} again, we have $(L_g(\Lambda_n),L_g(\Lambda_n^{(m)}))\Rightarrow (L_g(\Lambda),L_g(\Lambda))$, and 
		$\Lambda_n^{(m)}\Rightarrow \Lambda$ as $n\to\infty $ in the vague topology on locally finite point measures on $\D$. Hence, for any fixed $m$ we have 
		\begin{align*}
			\ev\Big[\big|G(L_g(\Lambda_n))-G(L_g(\Lambda_n^{(m)}))\big|\Big]\to 0,\qquad \ev[G(L_g(\Lambda_n^{(m)}))]\to \ev[G(L_g(\Lambda))],
		\end{align*}
		as $n\to\infty$.
		Therefore, we have
		\begin{align*}
			\limsup_{n\to\infty}\big|\ev[F(\Phi_n^*)G(L_g(\Lambda_n))]-\ev[F(\Phi_\infty^*)]\ev[G(L_g(\Lambda))]\big|\le 2\|G\|_\infty \ev[|F(\Phi_\infty^*)-F(\Phi_m^*)|].
		\end{align*}
		Sending $m\to\infty$ and using the a.s.~uniformly on compacts convergence of $\Phi_m^*\to\Phi_\infty^*$ in Proposition \ref{prop:martingale_conv} completes the proof. 
	\end{proof}

	Note that by applying the argument used in the proof of Proposition \ref{prop:asymp_ind} to functionals of $(\Phi_n^*,\sqrt{n}B_n)$, one obtains the joint convergence
	\begin{align}\label{eq:joint_ind}
		(\Phi_n^*,\sqrt{n}B_n)\Rightarrow (\Phi_\infty^*,\sqrt{\tfrac{2}{\beta}}f_{\textup{GAF}}),\qquad \text{in law as $n\to\infty$,}
	\end{align}
	with respect to the compact-open topology, where the limiting functions $\Phi_\infty^*$ and $f_{\textup{GAF}}$ are independent. Using the fact that multiplication is continuous with respect to the compact-open topology, one may further deduce that 
	\begin{align*}
		\sqrt{n}\Phi_n\Rightarrow \sqrt{\tfrac{2}{\beta}}\exp\left(\sqrt{\tfrac{2}{\beta}}\sum_{j\ge1}\frac{\zeta_j}{\sqrt{j}}z^j\right)\cdot\left(\sum_{j\ge 0}\xi_j z^j\right)
	\end{align*}
	in law as $n\to\infty$, with respect to the topology of uniform convergence on compact sets of $\D$. Here $\zeta_j,j\ge 1$ and $\xi_j,j\ge 0$ are two sequences of i.i.d.~standard complex Gaussian variables and are independent of each other.

	We conclude the paper with a discussion of the possible extensions of our results.

	\begin{remark}
		Our results can be generalized to a large class of sequences of Verblunsky coefficients. Following the approach used in Propositions \ref{prop:weak_conv_coeff} and \ref{prop:tight}, we require a scaling limit (in distribution) and suitable moment bounds for the Verblunsky coefficients, together with the uniform-on-compact convergence of $\log \Phi^*_n$ to a limit $\log \Phi^*_{\infty}$. 
		In particular, we believe that our methods can be adapted to study the bulk limit of the rank-one truncations of the circular Jacobi beta ensemble and real orthogonal beta ensemble.
		
		In \cite{LV_truncated}, Li and Valk\'o also raised the question of the scaling limit of multi-rank truncation of circular ensembles. We expect the bulk limit to be a beta-generalization of Krishnapur's result on the matrix-valued Gaussian analytic functions \cite{Krishnapur}. We leave these extensions for future work.
	\end{remark}

	\bibliography{rmt}

	{{  
			\bigskip
			\footnotesize

			\noindent Yun Li, \textsc{School of Mathematical Sciences, Nankai University, China.}\\
			\textit{Email:} \texttt{liyun1723@hotmail.com}
			
			\medskip
			
			\noindent Mingchang Liu, \textsc{Academy for Multidisciplinary Studies, Capital Normal University, China}
			\\
			\textit{Email:} \texttt{liumc\_prob@163.com}
			
			\medskip
			
			\noindent Joseph Najnudel, \textsc{School of Mathematics, University of Bristol, United Kingdom} 
			\\
			\textit{Email:} \texttt{joseph.najnudel@bristol.ac.uk}
	}}
	
\end{document}